\newtheorem{theo}{Theorem}[section]
\newtheorem{lemm}[theo]{Lemma}
\newtheorem{conj}[theo]{Conjecture}
\theoremstyle{remark}
\newtheorem{rema}[theo]{\bf Remark}
\theoremstyle{remark}
\newtheorem{exam}[theo]{\bf Example}
\theoremstyle{remark}
\newtheorem{defi}[theo]{\bf Definition}
\theoremstyle{remark}
\newtheorem{nota}[theo]{\bf Notation}
\theoremstyle{remark}
\newtheorem{ques}[theo]{\bf Question}
\def\tO{\widetilde{O}}
\begin{document}

\title{Primes in geometric series and finite permutation groups}

\author{Gareth A. Jones and Alexander K. Zvonkin}

\address{School of Mathematical Sciences, University of Southampton, Southampton SO17 1BJ, UK}
\email{G.A.Jones@maths.soton.ac.uk}

\address{LaBRI, Universit\'e de Bordeaux, 351 Cours de la Lib\'eration, F-33405 Talence 
Cedex, France}
\email{zvonkin@labri.fr}

\subjclass[2010]{11A41, 11N05, 11N32, 20B05, 20B25}.

\keywords{Permutation group, prime degree, projective space, Bunyakovsky conjecture, Goormaghtigh conjecture.}

\begin{abstract}
As a consequence of the classification of finite simple groups, the classification of 
permutation groups of prime degree is complete, apart from the question of when the 
natural degree $(q^n-1)/(q-1)$ of ${\rm L}_n(q)$ is prime. We present heuristic arguments 
and computational evidence to support a conjecture that for each prime $n\ge 3$ there are 
infinitely many primes of this form, even if one restricts to prime values 
of $q$.
\end{abstract}

\maketitle


\vspace{-1cm}

\section{Introduction}\label{Intro}

The study of transitive permutation groups of prime degree goes back to the work of Galois on 
polynomials of prime degree. It is sometimes asserted that the groups of prime degree are now 
completely known, as a consequence of the classification of finite simple groups. This assertion 
is true only if one ignores an apparently difficult number-theoretic problem, namely the existence 
or otherwise of infinitely many primes of a particular form. The list of such permutation groups 
includes several easily described infinite families, three relatively small sporadic examples, 
and one other family which will be the subject of this note.

Let $p$ be a prime, and let $q=p^e$, $e\ge 1$, be a prime power. The projective special 
linear groups ${\rm L}_n(q)={\rm PSL}_n(q)$ and some closely related groups act doubly transitively, 
with degree
\begin{equation}\label{projeqn}
m = \frac{q^n-1}{q-1}=1+q+q^2+\cdots+q^{n-1},
\end{equation}
on the points or hyperplanes of the projective space ${\mathbb P}^{n-1}({\mathbb F}_q)$ 
for integers $n\ge 2$ and prime powers $q\ge 2$.

\begin{defi}[Projective prime]
If the number $m$ of points and of hyperplanes of the projective space 
${\mathbb P}^{n-1}({\mathbb F}_q)$, defined by (\ref{projeqn}), is prime, 
then we call it a {\em projective prime}.
\end{defi}

\begin{rema}[$n$ prime]\label{rem:n-prime}
A necessary condition for $m$ in (\ref{projeqn}) to be prime is the primality of the
exponent $n$ since otherwise the polynomial $1+t+t^2+\cdots+t^{n-1}$
would be reducible over $\mathbb Z$.
\end{rema}

The only projective primes with $n=2$ are the Fermat primes $m=2^{2^k}+1$, 
while those with $q=2$ are the Mersenne primes, of the form $m=2^n-1$ with $n$ prime. 
However, there are many others, such as $m=13$ with $n=q=3$. An interesting case is 
the Mersenne prime
$$
m=31=1+2+4+8+16=1+5+25.
$$ 
Of course, it is an open problem whether there are infinitely many Fermat or Mersenne primes; 
at the time of writing, only five Fermat primes (with $k=0,\ldots, 4$) and $51$ Mersenne 
primes are known to exist. More generally, the existence of infinitely many projective 
primes seems to be an open problem. 

As in the case of Mersenne primes, there is plausible heuristic evidence, given in Section~\ref{Heuristic}, to support a conjecture that there are infinitely many projective primes. Indeed, there is much stronger computational 
support for this, even in restricted cases such as when $n=3$ and $q$ is prime (see Section~\ref{Comp}). Our aim in 
this note is to put forward such evidence, in the hope of inspiring specialists in number 
theory to address this problem. Thus, we formulate the following

\begin{conj}[Projective primes]
There are infinitely many projective primes.
\end{conj}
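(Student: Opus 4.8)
The plan is to fix a single prime exponent and to seek infinitely many prime values of the corresponding polynomial, since by Remark~\ref{rem:n-prime} only prime $n$ can contribute. The most promising target is $n=3$, where $m=1+q+q^2=\Phi_3(q)$ and $\Phi_3(t)=t^2+t+1$, but the argument runs uniformly in $n$. First I would record the relevant algebraic facts: for every prime $n$ the polynomial
\[
\Phi_n(t)=1+t+t^2+\cdots+t^{n-1}=\frac{t^n-1}{t-1}
\]
is the $n$-th cyclotomic polynomial, hence irreducible over $\mathbb{Z}$, has positive leading coefficient, and satisfies $\Phi_n(0)=1$, so that no prime divides all of its integer values. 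These are exactly the hypotheses of the Bunyakovsky conjecture.

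The core of the argument would then be an appeal to Bunyakovsky's conjecture, which asserts that such a polynomial takes infinitely many prime values; applied to $\Phi_n$ this produces infinitely many projective primes with $q$ ranging over the positive integers. To match the stronger statements supported in Sections~\ref{Heuristic} and~\ref{Comp}, where $q$ is required to be prime, I would instead invoke the multivariable form, namely Schinzel's Hypothesis H for the pair of polynomials $t$ and $\Phi_n(t)$: one checks there is no prime $p$ with $p\mid t\,\Phi_n(t)$ for all integers $t$ (for $n=3$, e.g.\ $t=1$ defeats $p=2$ and $t=2$ defeats $p=3$), so there is no local obstruction, and Hypothesis H predicts infinitely many primes $q$ with $\Phi_n(q)$ also prime. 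The quantitative Bateman--Horn conjecture then supplies the expected asymptotic density of such $q$, which is precisely the heuristic developed in Section~\ref{Heuristic}.

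A subsidiary point, needed only to guarantee that one genuinely obtains infinitely many \emph{distinct} projective primes rather than a single value counted repeatedly, is to control coincidences across different parameters. Here the Goormaghtigh conjecture is relevant: it asserts that the equation
\[
\frac{x^a-1}{x-1}=\frac{y^b-1}{y-1},
\qquad x,y>1,\ a,b>2,\ (x,a)\neq(y,b),
\]
has only the two solutions giving $31$ and $8191$ (the coincidence $31=1+2+4+8+16=1+5+25$ noted in the Introduction). Granting this, distinct pairs $(n,q)$ yield distinct primes apart from finitely many exceptions, so the infinitude survives the identification.

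The main obstacle is that each of these ingredients is itself an open conjecture. No instance of Bunyakovsky's conjecture for a polynomial of degree $\ge 2$ has ever been proved---the infinitude of primes of the form $k^2+1$, let alone $k^2+k+1$, remains unknown---so the reduction above merely converts our conjecture into an equally inaccessible one rather than settling it. An unconditional proof would require a genuine breakthrough in sieve theory capable of detecting primes in a sparse polynomial sequence, which lies well beyond current technology; this is why the present note offers heuristic and computational evidence in place of a proof.
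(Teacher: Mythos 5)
The statement you were asked to prove is, in the paper, precisely that: a conjecture. The authors offer no proof, only heuristic arguments (Section~\ref{Heuristic}) and computational evidence (Section~\ref{Comp}), and your proposal honestly reaches the same conclusion --- that the assertion cannot currently be proved --- so there is no ``gap'' in the usual sense. Your route differs from theirs in a worthwhile way: you reduce the conjecture to Schinzel's Hypothesis~H for the pair $\{t,\Phi_n(t)\}$, whereas the paper never makes the reduction formal and instead runs a direct probabilistic heuristic (Prime Number Theorem plus a Mertens-type correction using Lemma~\ref{SmallPrimes}) to estimate the \emph{count} of projective primes up to $x$, which is the quantitative content your appeal to Bateman--Horn would supply. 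The paper itself flags, at the end of Section~\ref{BunConj}, exactly the subtlety you handle with Hypothesis~H: Bunyakovsky for $\Phi_n$ alone is not enough because a projective prime requires $q$ to be a prime power.

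Two local corrections to your write-up. First, your opening claim that Bunyakovsky applied to $\Phi_n$ ``produces infinitely many projective primes with $q$ ranging over the positive integers'' is wrong as stated: if $q$ is not a prime power there is no field $\mathbb{F}_q$ and hence no projective space, so $\Phi_n(q)$ being prime does not make it a projective prime (the paper makes this very point with $1+90+90^2=8191$). Only your second step, via Hypothesis~H restricted to prime $q$, actually addresses the conjecture. Second, the Goormaghtigh detour is unnecessary: for a fixed prime $n$ the map $q\mapsto\Phi_n(q)$ is strictly increasing on positive integers, so infinitely many admissible $q$ automatically give infinitely many \emph{distinct} projective primes; coincidences across different $(n,q)$ could only reduce multiplicity, never the cardinality of the set of values obtained from a single $n$. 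With those repairs your argument is a clean conditional reduction, but, as you say, it exchanges one open problem for another, which is exactly why the paper stops at heuristics.
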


\begin{rema}[Addition of primes]
More than once, some mathematicians (and physicists), among them
very eminent ones, have expressed the opinion that the two famous Goldbach conjectures are completely devoid of 
interest since they concern the addition of primes whereas primes are created to be multiplied, 
not added. Our note may be considered as a strong case for the interest, in certain contexts, 
of the addition of primes and of numbers related to them (such as prime powers). By the way, one of the Goldbach conjectures has already been proved by Harald Helfgott~\cite{Helfgott}.
\end{rema}

\begin{nota}[Primes]
Notation for primes depends on the context. We follow the tradition of denoting a prime number 
by the letter $p$ when this number is treated alone. If, however, there are more than one prime 
number involved, as is the case, for example, in formula (\ref{projeqn}), then one of these 
numbers may be denoted by $m$ or by some other letter.
\end{nota}


\section{Transitive permutation groups of prime degree}\label{Groups}

This section summarises the background in finite permutation groups from which the problem 
stated in the Introduction arises. Any reader who is interested only in the problem itself 
can safely omit this section. For more details on permutation groups of prime degree, 
see~\cite[\S3.5]{DM} or \cite[\S V.21]{Hup}.

An elementary but important fact about transitive groups of prime degree is that they 
are all primitive, that is, they leave invariant no non-trivial equivalence relations. 
In particular, this means that rational or meromorphic functions of prime degree cannot 
be compositions of those of lower degree. Groups of prime degree are also rather rare: 
for example, there are $2\,801\,324$ transitive groups of degree $32$ (all but seven of 
them imprimitive), and only twelve groups of degree~$31$; similarly there are $315\,842$ 
groups of degree $40$, but only ten of degree $41$, and six of degree $47$.
The number of transitive groups of degree 32 is computed in \cite{Hulpke}. The database 
\cite{Galois-DB} contains a list of all transitive groups of degree $m\le 47$, $m\ne 32$. 
The GAP system \cite{GAP} contains a list of all {\sl primitive}\/ groups of degrees 
$m\le 2499$, and therefore, in particular, a list of all the groups of prime degrees 
up to the same limit.

One of the sections of the memoir by \'Evariste Galois \cite{Galois}\footnote{In his preface
of 16 January 1831, Galois writes that this text is an ``extrait d'un ouvrage que j'ai 
eu l'honneur de pr\'esenter \`a l'Acad\'emie il y a un an''. The French word ``ouvrage''
means either a book or just a large piece of work. This larger text was sent to Fourier for 
refereeing. Fourier had suddenly died, and the manuscript was never found among his papers.} 
is called ``Application to irreducible equations of prime degree''. If we translate the work 
of Galois on polynomials and their roots into modern terminology, he showed that the 
solvable groups of prime degree $p$ are the subgroups $G$ of the $1$-dimensional 
affine group
\[{\rm AGL}_1(p)=\{t\mapsto at+b\mid a, b\in{\mathbb F}_p, a\ne 0\}\]
containing the translation subgroup ${\rm C}_p=\{t\mapsto t+b\mid b\in{\mathbb F}_p\}$. Such 
groups $G$ are semidirect products $G={\rm C}_p\rtimes{\rm C}_d$ where ${\rm C}_d$
acts as a subgroup of~${\mathbb F}_p^*$ for some divisor $d$ of $p-1$. There is one 
group $G$ for each such $d$, including $G={\rm C}_p$ for $d=1$ and $G={\rm AGL}_1(p)$ 
for $d=p-1$.

This directs our attention to the nonsolvable groups of prime degree.
Burnside~\cite[\S251]{Bur11} showed that any such group $G$ must be doubly transitive
(as is ${\rm AGL}_1(p)$, unlike its proper subgroups). In fact, in this case
elementary arguments show that a minimal normal subgroup $S$ of $G$ is a nonabelian 
simple group, which is also transitive of degree $p$, with trivial centraliser $C_G(S)$ 
in $G$. Thus $G$ acts faithfully by conjugation on $S$, so
\[S\le G\le{\rm Aut}\,S.\]
This reduces the problem to that of determining the nonabelian simple groups $S$ of
prime degree~$p$, and then studying their automorphism groups for possible subgroups
$G$ of degree~$p$ (the action of~$S$ need not extend to all subgroups of ${\rm Aut}\,S$). 

The classification of finite simple groups was announced around 1980, though not 
completely proved until over twenty years later. One consequence (see~\cite{Cam81}, 
for example) was the classification of doubly transitive finite permutation groups. 
There are eight families, described 
in some detail in~\cite[\S7.7]{DM} and summarised in~\cite[\S7.4]{Cam}\footnote{Note that 
${\rm Aut}\,{\rm M}_{22}$, of degree $22$, is omitted from~\cite[p.~252]{DM}. Similarly, 
${\rm L}_2(11)$, of degree~$11$, is omitted from the list of groups of prime degree 
in~\cite[\S V.21.2]{Hup}, though it is mentioned in II.8.28(6) and in the 
Errata in the 2nd printing.}. As far as our problem is concerned, most of them can be 
ignored, as their degrees are composite: for example, the symplectic groups 
${\rm Sp}_{2n}(2)$ have degrees $2^{n-1}(2^n\pm 1)$, while the unitary and `small' 
Ree groups over ${\mathbb F}_q$ have degree $q^3+1$, divisible by $q+1$. 
The groups which survive this elimination process are listed in the following theorem:

\begin{theo}[Transitive groups of prime degree]\label{th:groups}
The nonabelian simple permutation groups $S$ of prime degree, together with any 
transitive groups $G\le{\rm Aut}\,S$ of degree equal to that of $S$, 
are as follows:
\begin{enumerate}
\item[(a)] 	alternating groups $S={\rm A}_p$ for primes $p\ge 5$, together with the 
			corresponding symmetric groups ${\rm Aut}({\rm A}_p)={\rm S}_p$; 
	
\item[(b)]  $G=S={\rm L}_2(11)$ for $p=11$, acting on the cosets of a subgroup ${\rm A}_5$ 
			{\rm (}two representations, on two conjugacy classes of such subgroups, 
			equivalent under ${\rm Aut}\,S={\rm PGL}_2(11)${\rm )}, and the Mathieu 
			groups $S={\rm M}_{11}$ and ${\rm M}_{23}$, acting on Steiner systems with 
			$p=11$ and $p=23$ points;
\item[(c)]  groups $G$ such that 	
			$S={\rm L}_n(q)\le G\le {\rm P\Gamma L}_n(q)\le {\rm Aut}\,({\rm L}_n(q))$,		
			acting on the points or hyperplanes of the projective space 
			${\mathbb P}^{n-1}({\mathbb F}_q)$ when the degree $m=(q^n-1)/(q-1)$ 
			is a projective prime, $m\ge 5$.
\end{enumerate}
If we also include the affine groups, where
\begin{enumerate}
\item[(d)]  ${\rm C}_p\le G\le {\rm AGL}_1(p)$ for primes $p$,
\end{enumerate}
then we have a complete list of the transitive groups of prime degree.
\end{theo}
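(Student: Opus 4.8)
The plan is to derive the whole statement from the classification of the finite $2$-transitive groups, which follows from the classification of finite simple groups and is tabulated in \cite[\S7.7]{DM} and \cite[\S7.4]{Cam}. The reduction carried out above already shows that any nonabelian simple $S$ of prime degree is $2$-transitive and satisfies $S\le G\le{\rm Aut}\,S$; so it remains to run through the eight families, keep those whose degree can be prime, and in each surviving case pin down the overgroups $G$ that stay transitive of the same degree. I would sort the families into three types: those of provably composite degree, those of fixed small degree (the Mathieu and sporadic actions), and those whose degree is prime exactly when a parameter is (the alternating and linear groups).

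First I would eliminate the composite type. The symplectic groups ${\rm Sp}_{2n}(2)$ have degrees $2^{n-1}(2^n\pm1)$, carrying the proper factor $2^{n-1}\ge2$ once $n\ge2$; the unitary groups ${\rm PSU}_3(q)$ and small Ree groups $R(q)$ have degree $q^3+1=(q+1)(q^2-q+1)$, composite since $q\ge2$; and the Suzuki groups ${\rm Sz}(q)$ with $q=2^{2k+1}\ge8$ have degree $q^2+1=4^{2k+1}+1$, which is divisible by $4+1=5$ because $2k+1$ is odd, hence composite as $q^2+1\ge65$.

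For the fixed-degree type I would read off degrees: among ${\rm M}_{11},{\rm M}_{12},{\rm M}_{22},{\rm M}_{23},{\rm M}_{24}$ (degrees $11,12,22,23,24$) only $11$ and $23$ are prime, and among the remaining sporadic $2$-transitive actions (${\rm L}_2(11)$ on $11$ points, ${\rm A}_7$ on $15$, ${\rm HS}$ on $176$, ${\rm Co}_3$ on $276$) only ${\rm L}_2(11)$ survives, its two ${\rm A}_5$-coset representations being fused by ${\rm PGL}_2(11)$; together these give case~(b). Note that the projective action of ${\rm L}_2(11)$ has degree $12$, so the prime degree $11$ arises only from this exceptional action. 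For the parametric type, ${\rm A}_n$ has degree $n$, prime iff $n=p$, with ${\rm Aut}({\rm A}_p)={\rm S}_p$ for $p\ge5$ (the case $p=6$ not arising), giving (a); and ${\rm L}_n(q)$ has degree $m=(q^n-1)/(q-1)$, prime precisely in the projective-prime case, giving (c). Adjoining the solvable groups of prime degree already classified via Galois and Burnside then yields the affine case~(d), completing the list.

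The main point requiring care, rather than any single computation, is the bookkeeping at the top of case~(c). Because ${\rm L}_n(q)$ is already $2$-transitive on the points, and dually on the hyperplanes, of ${\mathbb P}^{n-1}({\mathbb F}_q)$, every intermediate group ${\rm L}_n(q)\le G\le{\rm P\Gamma L}_n(q)$ automatically inherits transitivity; but for $n\ge3$ the full group ${\rm Aut}({\rm L}_n(q))$ also contains the inverse-transpose graph automorphism, which interchanges points and hyperplanes instead of preserving either action. One must therefore check that this graph automorphism cannot be admitted into $G$ while all field and diagonal automorphisms can, so that ${\rm P\Gamma L}_n(q)$ is exactly the right ceiling, and must treat separately the degenerate case $n=2$, where points and hyperplanes coincide and no graph automorphism arises.
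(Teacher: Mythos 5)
Your proposal is correct and follows essentially the same route as the paper: reduce via Galois and Burnside to the doubly transitive case, invoke the CFSG classification of $2$-transitive groups, discard the families of provably composite degree (symplectic, unitary, Suzuki, Ree), and read off the survivors, with the point--hyperplane duality excluded from the overgroups in case (c) exactly as in the paper's Remark following the theorem. The paper presents this only as a summary of the preceding discussion rather than a formal proof, so your sketch is simply a more detailed version of the same argument.
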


\begin{rema}[Commentaries on Theorem \ref{th:groups}]\ 
\begin{enumerate}
\item	The group ${\rm L}_2(11)$ in (b) is one of three cases, known already to Galois, 
		in which the simple group ${\rm L}_2(p)$ has a non-trivial transitive representation 
		of degree less that $p+1$, specifically of degree $p=5,7$ or $11$ on the cosets of a 
		subgroup isomorphic to ${\rm A}_4$, ${\rm S}_4$ or ${\rm A}_5$. The first case appears 
		in both (a) and (c), via the isomorphisms 
		${\rm L}_2(5)\cong {\rm A}_5\cong {\rm L}_2(4)$, while the second appears in (c) 
		via the isomorphism ${\rm L}_2(7)\cong {\rm L}_3(2)$. 
		The group ${\rm Aut}\,({\rm L}_2(11))={\rm PGL}_2(11)$ does not have a
		representation of degree $11$; hence, only the group ${\rm L}_2(11)$ is 
		a member of our list.
\item	For the two Mathieu groups in (b) we have ${\rm Aut}\,({\rm M}_{11})={\rm M}_{11}$
		and ${\rm Aut}\,({\rm M}_{23})={\rm M}_{23}$.		
\item 	In (c), we have ${\rm Aut}\,({\rm L}_n(q))={\rm P\Gamma L}_n(q)$ if $n=2$, 
		but if $n\ge 3$ then ${\rm Aut}\,({\rm L}_n(q))$ contains ${\rm P\Gamma L}_n(q)$ 
		with index~$2$, the `extra automorphism' arising from the point-hyperplane duality 
		of ${\mathbb P}^{n-1}({\mathbb F}_q)$.
\item	In (d), the group ${\rm AGL}_1(p)$ is {\em not}\/ the automorphism group of
		${\rm C}_p$ (indeed, ${\rm Aut}\,({\rm C}_p)\cong{\rm C}_{p-1}$). The case (d)
		does not correspond to the general scheme of the cases (a), (b), (c) since,
		as explained above, the group ${\rm AGL}_1(p)$ is solvable.
\end{enumerate}
\end{rema}

For a given projective prime $m$, the groups $G$ in (c) are easily determined: 
they correspond bijectively to the subgroups of
\[{\rm P\Gamma L}_n(q)/{\rm L}_n(q)\cong({\rm PGL}_n(q)/{\rm L}_n(q))\rtimes 
{\rm Gal}\,{\mathbb F}_q\cong{\rm C}_d\rtimes{\rm C}_e\]
where $d=\gcd(q-1,n)$ and $q=p^e$ for some prime $p$. In fact, if $m$ is prime then 
$n$ is prime (see Remark \ref{rem:n-prime}) and $q\not\equiv 1$ mod~$(n)$, 
so $d=1$, the groups ${\rm PGL}_n(q)$ and ${\rm L}_n(q)$ coincide, and 
${\rm P\Gamma L}_n(q)/{\rm L}_n(q)\cong{\rm C}_e$. The real problem is to know which 
primes are projective, and thus correspond to groups in (c), and in particular whether 
or not there are infinitely many of them. 

Although this paper concentrates on those cases where ${\rm L}_n(q)$ has prime degree, 
there is also interest in cases such as ${\rm L}_5(3)$ where its natural degree $m$ is 
a prime power ($11^2$ in this case). For example, Guralnick~\cite{Gur} has shown that 
if a nonabelian simple group $S$ has a transitive representation of prime power degree, 
then $S$ is an alternating group or ${\rm L}_n(q)$ acting naturally, or ${\rm L}_2(11)$, 
${\rm M}_{11}$ or ${\rm M}_{23}$ acting as in Theorem~\ref{th:groups}(b), or the unitary 
group ${\rm U}_4(2)\cong {\rm Sp}_4(3)\cong {\rm O}_5(3)$ permuting the $27$ lines on 
a cubic surface. In particular, $S$ is doubly transitive in all cases except the last, 
where it has rank~3, that is, three orbits on ordered pairs. See also~\cite{EGSS},
where Estes, Guralnick, Schacher and Straus have shown that for each prime $p$
there are only finitely many $e, q, n\ge 3$ such that $p^e=(q^n-1)/(q-1)$.

Another related topic which we will not address here is the Feit--Thompson
Conjecture~\cite{FT62} (see also~\cite[Problem B25]{Guy}), that if $p$ and $q$ are 
distinct primes then $(p^q-1)/(p-1)$ does not divide $(q^p-1)/(q-1)$. A proof of this 
would significantly shorten the (very long) proof of the theorem~\cite{FT63} that groups 
of odd order are solvable.


\section{The Bunyakovsky Conjecture}\label{BunConj}

Viktor Bunyakovsky (1804--1889) was a Russian mathematician and a disciple of Cauchy. 
In Russia he is mainly known for the Cauchy--Bunyakovsky inequality
which, in the Western tradition, is named after Cauchy--Schwarz. (As is stated in the
Wikipedia, Bunyakovsky ``\ldots\ is credited with an early discovery of the Cauchy--Schwarz 
inequality, proving it for the infinite dimensional case in 1859, many years prior to Hermann Schwarz's works on the subject.")

In 1857, Bunyakovsky formulated the following conjecture (see \cite{Bun-1857, Bun-wiki}).

\begin{conj}[Bunyakovsky Conjecture]
The following fairly obvious necessary conditions for a polynomial $f(t)\in{\mathbb Z}[t]$ 
to have infinitely many prime values for $t\in{\mathbb N}$ are also sufficient:
\begin{itemize}
\item the leading coefficient of $f$ should be positive,
\item $f$ should be irreducible,
\item the integers $f(t)$ for $t\in{\mathbb N}$ should have greatest common divisor $1$.
\end{itemize}
\end{conj}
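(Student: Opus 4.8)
The plan is to read the Bunyakovsky Conjecture as a sweeping generalisation of Dirichlet's theorem on primes in arithmetic progressions, which is exactly the case $\deg f=1$, and to try to transport the proof of that case upward. First I would isolate the linear case $f(t)=at+b$ with $\gcd(a,b)=1$: here one attaches to each Dirichlet character $\chi$ modulo $a$ its $L$-function $L(s,\chi)=\sum_{n\ge 1}\chi(n)n^{-s}$, and the result follows from the non-vanishing $L(1,\chi)\neq 0$ for $\chi\neq\chi_0$ together with the simple pole of $L(s,\chi_0)$ at $s=1$; these facts force $\sum_{p\equiv b\,(a)}p^{-1}$ to diverge, so the progression contains infinitely many primes. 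The optimistic aim is to manufacture, for higher-degree $f$, an analogous harmonic-analytic device whose non-vanishing at the edge of the critical strip detects primes among the values $f(t)$.

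Next I would set up the quantitative heuristic that makes the three stated conditions plausible as sufficient, since any honest attack must reproduce at least the predicted density. For fixed $f$ of degree $d$ one counts $t\le x$ with $f(t)$ prime by sieving: for each prime $\ell$ let $\omega(\ell)$ be the number of residues $t$ modulo $\ell$ with $\ell\mid f(t)$, so the local survival probability is $1-\omega(\ell)/\ell$ against the baseline $1-1/\ell$ for a random integer of the size $|f(t)|\approx x^d$. The gcd condition guarantees $\omega(\ell)<\ell$ for every $\ell$, so no single prime eliminates all values, while irreducibility controls $\omega(\ell)$ on average through the Chebotarev count for the splitting of $\ell$ in $\mathbb{Z}[t]/(f)$. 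Combining the local factors with the size estimate $\log|f(t)|\sim d\log t$ yields the Bateman--Horn prediction
\[
\#\{t\le x:\ f(t)\text{ prime}\}\ \sim\ \frac{C(f)}{d}\int_2^x\frac{dt}{\log t},\qquad C(f)=\prod_\ell\frac{1-\omega(\ell)/\ell}{1-1/\ell},
\]
and the three hypotheses are precisely what keep the singular series $C(f)$ convergent and strictly positive.

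The hard part, and here I must be candid, is that turning this heuristic into a theorem lies beyond every known method as soon as $d\ge 2$. Sieve theory does deliver a lower bound of the correct order of magnitude for almost-primes --- Iwaniec's work shows, for example, that $t^2+1$ takes infinitely many values with at most two prime factors, and in general $f$ represents a $P_r$ for some bounded $r$ --- but the passage from $P_2$ to $P_1$, that is, to genuine primes, is obstructed by the parity barrier: an unweighted sieve cannot separate integers with an even number of prime factors from those with an odd number, and no replacement for the $L$-function input of the linear case is available for $d\ge 2$. My honest assessment is therefore that the proposal reduces the conjecture to breaking this parity obstruction, or to discovering a higher-degree analogue of Dirichlet's non-vanishing theorem, and that this is exactly the point at which the problem becomes genuinely open: not a single instance with $\deg f\ge 2$ has ever been established, which is why we shall invoke Bunyakovsky only heuristically in what follows.
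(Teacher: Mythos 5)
This statement is labelled a \emph{conjecture} in the paper, and the paper offers no proof of it: the authors state explicitly that it is known only in degree one (Dirichlet's theorem) and ``has not been proved for any polynomial of degree greater than $1$'', including $t^2+1$. Your proposal correctly refrains from claiming a proof, so there is no error to report --- the only ``gap'' is the conjecture itself, which you candidly identify. Your assessment of where the difficulty lies (the absence of any analogue of the non-vanishing of $L(1,\chi)$ in degree $\ge 2$, and the parity barrier that stops sieve methods at $P_2$ rather than $P_1$) is accurate and goes somewhat beyond what the paper says.

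Two points of comparison with the paper are worth making. First, your Bateman--Horn singular series $C(f)=\prod_\ell\bigl(1-\omega(\ell)/\ell\bigr)\big/\bigl(1-1/\ell\bigr)$ is precisely the quantitative skeleton of the paper's own heuristics in Section~\ref{Heuristic}: for $f(t)=1+t+\cdots+t^{n-1}$ Lemma~\ref{SmallPrimes} computes which local factors are trivial ($\omega(r)=0$ for all primes $r\le 2n$ other than $n$), and the product $P(2n)$ in (\ref{prod}) is exactly the resulting partial Euler product; the discussion in Section~\ref{warning} of which primes to include is the paper wrestling with the convergence and normalisation of your constant $C(f)$. Second, a caveat the paper itself raises and that your proposal omits: even a full proof of the Bunyakovsky Conjecture for the polynomials $1+t+\cdots+t^{n-1}$ would not settle the paper's main question, since projective primes require $t$ to range over prime powers rather than over all of $\mathbb{N}$, so an extra ingredient (prime values of $f$ at prime arguments, in the spirit of Schinzel's Hypothesis H or Bateman--Horn for the pair $\{t,\,f(t)\}$) would still be needed.
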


The last condition is needed in order to avoid examples such as $f(t)=t^2+t+2$, which 
satisfies the first two conditions but has even values for all $t\in{\mathbb N}$; 
Bunyakovsky gives the surprising example $f(t)=t^9-t^3+2520$, which is irreducible 
but has all its values divisible by $504$. His conjecture is a special case of
Schinzel's Hypothesis H~\cite{SS}, which concerns finite sets of polynomials simultaneously 
taking prime values.

\begin{rema}[Verification of the coprimality of $f(t)$ for $t\in\mathbb{N}$]
\label{rem:verif-Bun}
The existence of examples like the one above leads to the following question: how to 
verify that the greatest common divisor of $f(t)$ for $t\in{\mathbb N}$ is $1$? A~method
(today we would say, an algorithm) proposed by Bunyakovsky is based on the following
observations.
\begin{enumerate}
\item	Let $f(t)=c_nt^n+\cdots+c_1t+c_0$. If a prime $p$ divides all the values of
		$f(t)$ for $t\in{\mathbb N}$, then $p$ is a divisor of $c_0$. Indeed, substituting 
		$t=p$ in $f(t)$ and taking the result modulo $p$ we get $c_0 \equiv 0 \mod (p)$. 
		Thus, we have only a finite number of primes $p$ to test.
\item	Let $h(t)$ be a polynomial of degree $k<p$. Then all the values of 
		$h(t)$, $t\in{\mathbb N}$, are divisible by~$p$ if and only if all the
		coefficients of $h$ are divisible by~$p$. Indeed, otherwise, reducing $h(t)$
		modulo $p$ we would get a non-zero polynomial of degree less that $p$ which 
		would have $p$~roots.
\item	All the values of the polynomial $t^p-t$ are obviously divisible by $p$.
		Let $h(t)$ be the remainder of $f(t)$ on division by $t^p-t$. All that
		remains is to determine whether the coefficients of $h(t)$ are all divisible by $p$.
\end{enumerate}
\end{rema}

The conjecture is true for $\deg(f)=1$: this is Dirichlet's Theorem on primes in an 
arithmetic progression (see~\cite[\S 5.3.2]{BS} for a proof). However, it has not been 
proved for any polynomial of degree greater than $1$, including the case $f(t)=t^2+1$
(see~\cite[\S A1]{Guy},~\cite[\S 2.8]{HW} or~\cite[Ch.~3.IVD]{Rib}); this is sometimes 
called Landau's problem, though in fact it goes back to Euler~\cite{Eul}.
In our case we have the advantage 
that we are not restricted to a single polynomial: we may consider polynomials 
$f(t)=1+t+t^2+\cdots+t^{n-1}$ for any prime $n\ge 3$. (Since we have nothing to add 
to the current state of knowledge or ignorance concerning Fermat primes, we will 
assume for the rest of this paper that $n\ne 2$.) On the other hand, we require 
prime values of $f(t)$ where $t$ is a {\sl prime power\/}, so a proof of the Bunyakovsky 
Conjecture for such a polynomial would not necessarily yield infinitely many projective 
primes.

Finally, we note that the Bunyakovsky Conjecture has recently arisen in a similar way 
in the construction by Amarra, Devillers and Praeger~\cite{ADP} of block-transitive
point-imprimitive $2$-designs with specific parameters.


\section{Heuristic arguments}\label{Heuristic}

In this section we will present some heuristic arguments to support the conjecture that 
there are infinitely many projective primes $m=(q^n-1)/(q-1)$. They are based on heuristic 
arguments used elsewhere in considering the distribution and number of primes of a given 
form. In particular, some of the arguments in this section are adapted from Wagstaff's 
treatment~\cite{Wag} of conjectures of Gillies, Lenstra and Pomerance
about Mersenne primes, and its summary in 
Prime Pages\footnote{See \url{https://primes.utm.edu/mersenne/heuristic.html}.}. 
Of course heuristic arguments, based on assumptions which, although plausible, cannot be 
rigorously justified, do not prove anything (in particular, see the warning in Section~\ref{warning}).
However, they may suggest results which one 
could attempt to prove by more legitimate means. Authors of classic texts did not disdain 
such kind of arguments: see, for example, Sections 2.5 and 22.20 of the book \cite{HW} by Hardy and 
Wright, where they present heuristic evidence that there are only finitely many Fermat primes
whereas there are infinitely many prime pairs;
see also the discussion of probabilistic methods in~\cite[Notes on Ch.~8.3]{NZM}
and P\'olya's carefully-qualified defence of heuristic reasoning in number theory 
in~\cite{Pol}.

Beside the ``general'' conjecture of infinitely many projective primes we also formulate
a number of ``specific'' (and therefore stronger) conjectures concerning projective
primes of some specific forms. Their plausibility is based mainly on a series of
computational results presented in Sections~\ref{Comp} and \ref{sec:stochastic}.


\subsection{Prime divisors of $m$}\label{sec:small}
We consider firstly the case of any fixed prime $n\ge 3$, and secondly that of any fixed 
prime power $q$. In each case, we will need the following lemma in order to give better 
estimates for the number of projective primes up to some bound.

\begin{lemm}[Prime divisors of $m$]\label{SmallPrimes}
Let $m=(q^n-1)/(q-1)$ for some integer $q$ and prime $n\ge 3$, and let $r$ be a prime 
dividing $m$. Then either $r\equiv 1$ {\rm mod}~$(2n)$ 
{\rm (}so in particular $r\ge 2n+1${\rm )},
or $r=n$ with $q\equiv 1$ {\rm mod}~$(n)$.
Conversely, if $q\equiv 1$ {\rm mod}~$(n)$ then $m$ is divisible by $n$.
\end{lemm}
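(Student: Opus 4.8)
The plan is to analyse a prime divisor $r$ of $m$ through the multiplicative order of $q$ modulo $r$. First I would observe that $r\nmid q$: if $r$ divided $q$ then it would divide every term $q,q^2,\dots,q^{n-1}$, giving $m\equiv 1\pmod r$, contrary to $r\mid m$. Thus $q$ is a unit modulo $r$. Since $m(q-1)=q^n-1$ and $r\mid m$, we get $q^n\equiv 1\pmod r$, so the order $d=\operatorname{ord}_r(q)$ divides $n$. As $n$ is prime, either $d=1$ or $d=n$, and these two cases will correspond exactly to the two alternatives in the statement.

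In the case $d=1$ we have $q\equiv 1\pmod r$, whence $m=1+q+\cdots+q^{n-1}\equiv n\pmod r$. Since $r\mid m$, this forces $r\mid n$, and as $n$ is prime we conclude $r=n$; moreover the congruence $q\equiv 1\pmod r$ then reads $q\equiv 1\pmod n$. This is precisely the second alternative. In the case $d=n$, Fermat's little theorem gives $d\mid r-1$, so $n\mid r-1$, that is, $r\equiv 1\pmod n$.

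The one point that needs a small extra argument, and which I would flag as the only real obstacle, is the strengthening of $r\equiv 1\pmod n$ to $r\equiv 1\pmod{2n}$. Here I would use that $n$ is an \emph{odd} prime (recall $n\ge 3$), so that $\gcd(2,n)=1$, together with the fact that $r\ne 2$: indeed $r=2$ would force $n\mid r-1=1$, which is impossible. Hence $r$ is odd, so $r\equiv 1\pmod 2$, and combining this with $r\equiv 1\pmod n$ through the Chinese Remainder Theorem yields $r\equiv 1\pmod{2n}$; in particular $r\ge 2n+1$. Finally, the converse is immediate: if $q\equiv 1\pmod n$, then $m\equiv 1+1+\cdots+1=n\equiv 0\pmod n$, so $n\mid m$.
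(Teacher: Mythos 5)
Your proof is correct and follows essentially the same route as the paper: both arguments reduce to the order of $q$ modulo $r$ dividing the prime $n$, split into the cases $q\equiv 1\pmod r$ (giving $r=n$) and $n\mid r-1$, and then upgrade $r\equiv 1\pmod n$ to $r\equiv 1\pmod{2n}$ using that $r$ is odd. The only cosmetic difference is how oddness of $r$ is obtained --- the paper notes that $m$ itself is odd, while you observe that $n\mid r-1$ already rules out $r=2$; both are valid.
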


\noindent{\sl Proof.} If a prime $r$ divides $m$ then $q^n\equiv 1$ mod~$(r)$. 
Since $n$ is prime, it follows that either $n$ divides the order $r-1$ of the 
multiplicative group ${\mathbb F}_r^*$, or $q\equiv 1$ mod~$(r)$.

If $n$ divides $r-1$ then $r\equiv 1$ mod~$(n)$. Clearly $m=1+q+\cdots +q^{n-1}$ is odd, 
and hence so is $r$, so $r\equiv 1$ mod~$(2n)$ since $n$ is odd and hence $r\ge 2n+1$. 

If $q\equiv 1$ mod~$(r)$ then 
\[m=1+q+\cdots+q^{n-1}\equiv\underbrace{1+1+\cdots+1}_{n\,\,{\rm times}}\equiv n \; 
{\rm mod}~(r).\]
However, $m\equiv 0$ mod~$(r)$, so $n\equiv 0$ mod~$(r)$ and hence $r=n$ since 
$n$ is prime. The converse is obvious. 
\hfill$\square$

\medskip

Thus $m$ is not divisible by any prime $r\le 2n$, except the prime $r=n$ if 
$q\equiv 1$ mod~$(n)$.

\begin{exam}[For Lemma \ref{SmallPrimes}]\label{ex:small-primes}
Let $n=3$. If $q=11\not\equiv 1$ mod~$(3)$ then $m=133=7\cdot 19$; this is divisible 
by the primes $r=7$ and $19$, both greater than $2n=6$. However, 
if $q=16\equiv 1$ mod~$(3)$ then $m=273=3\cdot7\cdot 13$, divisible by the prime 
$r=n=3$ in addition to $r=7$ and $13$. Note that the `large' primes $7, 13$ and $19$ 
appearing here as divisors of $m$ are all congruent to $1$ mod~$(2n)$.
\end{exam}


\subsection{Fixed $n$, while $q=p\to\infty$}\label{sec:fixed-n}
Let us fix a prime $n\ge 3$, and consider whether $m=(q^n-1)/(q-1)$ is prime.
For simplicity we will restrict $q$ to be prime, rather than a prime power; 
therefore, from now on we will denote it by $p$ instead of $q$. 
By the Prime Number Theorem (see~\cite[Theorem~6 and Ch.~XXII]{HW} for example),
the number of primes $p$ in the range 
$1\le p\le x$ is approximately $x/\ln(x)$ for large $x$. However, if $p\equiv 1$ mod~$(n)$ 
then $m$ cannot be prime by Lemma~\ref{SmallPrimes}, so we should restrict attention to the 
primes $p\not\equiv 1$ mod~$(n)$. Since primes are approximately evenly distributed 
between the non-zero congruence classes mod~$(n)$ (see~\cite[\S 5.3.2]{BS}, for example), 
the number of primes $p$ we should consider is therefore approximately $(n-2)x/(n-1)\ln(x)$.

Now $1\le m\le (x^n-1)/(x-1)$, and the probability that a randomly-chosen integer $m$
in this range is prime is approximately
\begin{equation}\label{prob1}
\frac{1}{\ln((x^n-1)/(x-1))}\approx\frac{1}{n\ln(x)-\ln(x)}=\frac{1}{(n-1)\ln(x)}.
\end{equation}
However, we know from Lemma~\ref{SmallPrimes} that $m\not\equiv 0$ mod~$(r)$ 
for each prime $r\le 2n$, including $r=n$ since $p\not\equiv 1$ mod~$(n)$.
For each such $r$, excluding this one congruence class mod~$(r)$ multiplies the 
probability of $m$ being prime by $r/(r-1)$. If we regard congruences modulo distinct 
primes as statistically independent, then we should multiply the probability 
in (\ref{prob1}) by $P(2n)$, where
\begin{equation}\label{prod}
P(y):=\prod_{{\rm prime}\,\,r\le y}\left(1-\frac{1}{r}\right)^{-1}
\end{equation}
for $y\ge 2$ and the product, as indicated, is over all primes $r\le y$. 
This gives an approximate probability
\begin{equation}\label{prob7}
\frac{P(2n)}{(n-1)\ln(x)}
\end{equation}
that $m$ is prime. For fixed $n$ this has the form $c_n/\ln(x)$ for a constant
\[c_n := \frac{P(2n)}{(n-1)}.\]
If $n$ is small one can easily calculate $c_n$: for instance $c_3=15/8$ and $c_5=35/32$. 
For large $n$ one can approximate $c_n$ by using a theorem of Mertens (see~\cite{Mer}, 
\cite[\S22.9]{HW} or~\cite[Theorem~8.8(e)]{NZM}) that
\[\prod_{{\rm prime}\,\,r\le y}\left(1-\frac{1}{r}\right)\sim
\frac{\mu}{\ln(y)}\quad{\rm as}\quad y\to\infty,\]
where $\mu:=e^{-\gamma}=0.561459\ldots$\ \label{const:mu}
and $\gamma$ is the Euler--Mascheroni constant $0.577215\ldots$.
This gives an approximate probability
\begin{equation}\label{prob3}
\frac{c_n}{\ln(x)}\sim\frac{e^{\gamma}\ln(2n)}{(n-1)\ln(x)}
\end{equation}
that $m$ is prime.

If we multiply this probability by the approximate number of primes $p\not\equiv 1$ mod~$(n)$ in the range $1\le p\le x$, namely $(n-2)x/(n-1)\ln(x)$, we see that the expected number of primes $m$ arising in this way is approximately
\begin{equation}\label{prob8}
\frac{c_n(n-2)x}{(n-1)(\ln(x))^2}\sim\frac{e^{\gamma}(n-2)\ln(2n)x}{(n-1)^2(\ln(x))^2}\approx\frac{1.781(n-2)\ln(2n)x}{(n-1)^2(\ln(x))^2}.
\end{equation}
Since this number tends to $+\infty$ as $x\to\infty$ for fixed $n$, this suggests 
that we should obtain infinitely many projective primes $m$ in this way for any fixed prime $n\ge 3$ 
(and likewise if we allow $q$ to be an arbitrary prime power). 

For each fixed $n$ the estimate in (\ref{prob8}) has the form $C_nx/(\ln(x))^2$ for some constant $C_n$ depending only on $n$. This is analogous to the Hardy--Littlewood estimate $Cx/(\ln(x))^2$ for the number $\pi_2(x)$ of twin prime pairs $p$, $p+2$ with $p\le x$ (see~\cite{PP}), where
\[C=2\cdot\negthinspace\negthinspace\prod_{{\rm prime\;} r\ge 3}\frac{r(r-2)}{(r-1)^2}\approx 1.320323632.\]


\begin{exam}[$n=3$]
If we take $n=3$, so that $c_n=15/8$, then the number of primes $m=1+p+p^2$ for 
primes $p\le x$ should be approximately
\begin{equation}\label{n=3estimate}
\frac{15x}{16 \ln(x)^2}
\end{equation}
for large $x$. This estimate is compared with computational evidence in
Section~\ref{primefields} (see Table~\ref{tab:n=3ratios}).
\end{exam}


\subsection{Fixed prime power $q=p^e$ with $e\ge 2$}\label{sec:fixed-q}

Instead, let us now fix $q$ and let $n\to\infty$.

\begin{lemm}\label{e>1}
If $e\ge 2$ then there are no projective primes $m=(q^n-1)/(q-1)$ with $n>e$.
\end{lemm}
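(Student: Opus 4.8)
The plan is to exhibit a nontrivial factorization of $m$ directly from the arithmetic of $p$. Writing $q=p^e$, we have
\[
m=\frac{q^n-1}{q-1}=\frac{p^{en}-1}{p^e-1}.
\]
The basic tool is the classical identity $\gcd(p^a-1,p^b-1)=p^{\gcd(a,b)}-1$. Set $g=\gcd(n,e)$ and consider the two integers
\[
A=\frac{p^n-1}{p^g-1},\qquad B=\frac{p^e-1}{p^g-1}.
\]
Since $p^g-1=\gcd(p^n-1,p^e-1)$, the integers $A$ and $B$ are coprime. I claim that $A$ is a \emph{proper} divisor of $m$, which forces $m$ to be composite.

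First I would establish the divisibility $A\mid m$. Starting from $m(p^e-1)=p^{en}-1$ and using $n\mid en$, so that $p^n-1$ divides $p^{en}-1=m(p^e-1)$, I would substitute $p^n-1=A(p^g-1)$ and $p^e-1=B(p^g-1)$ to obtain $A\mid mB$. Since $\gcd(A,B)=1$, this yields $A\mid m$, as required. It then remains to check that this divisor is proper, i.e.\ that $1<A<m$. The lower bound is immediate: because $n>e\ge g=\gcd(n,e)$, we have $n>g$, hence $p^n-1>p^g-1$ and so $A>1$. For the upper bound I would compare sizes: since $m>q^{n-1}=p^{e(n-1)}$ while $A\le p^n-1<p^n$, and since $e\ge 2$ together with $n\ge 3$ gives $e(n-1)\ge 2(n-1)\ge n$, we get $m>p^{e(n-1)}\ge p^n>A$. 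Thus $m=A\cdot(m/A)$ with both factors exceeding $1$, so $m$ is not prime, and there is no projective prime of this shape.

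The only delicate point — and the one I expect to require the most care — is the divisibility $A\mid m$, since it rests on the coprimality of $A$ and $B$ extracted from the gcd identity; the size comparison is routine arithmetic. An alternative, perhaps more transparent, route would pass through the cyclotomic factorizations $p^{en}-1=\prod_{d\mid en}\Phi_d(p)$ and $p^e-1=\prod_{d\mid e}\Phi_d(p)$, which give $m=\prod_{d\mid en,\,d\nmid e}\Phi_d(p)$. Here the indices $d=n$ and $d=en$ both divide $en$ but not $e$ (as $n>e$ and $e\ge 2$) and are distinct, so $\Phi_n(p)$ and $\Phi_{en}(p)$ are two distinct factors of $m$, each exceeding $1$, again forcing $m$ to be composite. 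I would present the gcd argument as the main proof, since it is self-contained, and mention the cyclotomic factorization only as a conceptual remark.
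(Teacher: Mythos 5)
Your argument is correct. Both your main route and the paper's rest on the same arithmetic fact, namely that $p^n-1$ divides $p^{en}-1=m(p^e-1)$, but the execution differs. The paper simply displays the factorization
\[
m=\frac{(1+p+\cdots+p^{n-1})\,(1+p^n+\cdots+p^{n(e-1)})}{1+p+\cdots+p^{e-1}}
\]
and observes that each of the two numerator factors exceeds the denominator, which forces $m$ to be composite (if $m$ were prime it would divide one factor, making the other at most the denominator). That is a two-line proof using only geometric-series identities. You instead isolate an explicit proper divisor $A=(p^n-1)/(p^{\gcd(n,e)}-1)$ of $m$, justifying $A\mid m$ via the identity $\gcd(p^a-1,p^b-1)=p^{\gcd(a,b)}-1$ and the coprimality of $A$ and $B$; your size estimates $1<A\le p^n-1<p^{e(n-1)}<m$ are all valid (using $n>e\ge2$, hence $n\ge3$ and $e(n-1)\ge n$). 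What your version buys is robustness: it never uses the primality of $n$ (in context $\gcd(n,e)=1$ and your $A$ is exactly the paper's first numerator factor), and it makes the ``proper divisor'' fully explicit rather than leaving the compositeness as ``clear.'' What it costs is length and a heavier toolkit; the cyclotomic factorization you mention as a remark, $m=\prod_{d\mid en,\ d\nmid e}\Phi_d(p)$ with the two distinct indices $d=n$ and $d=en$, is also correct and is perhaps the cleanest conceptual explanation of all three.
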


\begin{proof}
If $e\ge 2$, so that $q$ is a prime power but not itself a prime, we have
$$
m = \frac{q^n-1}{q-1} = 
\frac{(1+p+\cdots+p^{n-1})\,(1+p^n+\cdots+p^{n(e-1)})}{1+p+\cdots+p^{e-1}}.
$$
This is clearly composite if $n>e$ since the two factors in the numerator are 
each larger than the denominator.
\end{proof}

Thus, for a fixed $q$ with $e\ge 2$ we can have only a finite number of projective primes $m$.

\begin{rema}[$e=2$]\label{re:e=2}
If $e=2$ and $m$ is prime then $n=2$ (and hence $q=p^2$ is even,
so that $p=2$ and $m=1+q=5$), against our earlier assumption;
thus in dealing with prime powers $q=p^e>p$ (as in Section~\ref{sec:primepowersagain},
where a number of examples are given) we will generally assume that $e\ge 3$.
\end{rema}

Since the ultimate goal of this line of research is to complete the classification 
of the permutation groups of prime degree, it would still be of interest to know the 
finitely many projective primes arising for each proper prime power $q$.


\subsection{Fixed prime $p$ while $n\to\infty$}\label{sec:fixed-p1}
Let us therefore take $e=1$, so we fix a prime $p$, and consider the primality of 
$m=(p^n-1)/(p-1)$ for odd primes $n$ as $n\to\infty$. By Lemma~\ref{SmallPrimes} 
we may exclude any primes $n$ dividing $p-1$, since they cannot give prime values 
of $m$. Then $m$ is not divisible by any prime $r\le 2n$.

By the Prime Number Theorem, for large $n$ a randomly-chosen integer close to $(p^n-1)/(p-1)$ is prime with probability approximately
\begin{equation}\label{prob}
\frac{1}{\ln((p^n-1)/(p-1))}\approx\frac{1}{n\ln(p)-\ln(p-1)}.
\end{equation}
However, $m$ is not uniformly distributed, since it is coprime to each prime 
$r\le 2n$. As before, for each such $r$ this excludes a proportion $1/r$ of the integers 
close to $(p^n-1)/(p-1)$, so we should multiply the probability in (\ref{prob}) by 
$M(2n)\sim e^{\gamma}\ln(2n)$, giving an approximate probability
\begin{equation}\label{prob2}
\frac{e^{\gamma}\ln(2n)}{n\ln(p)-\ln(p-1)}\sim\frac{e^{\gamma}\ln(n)}{n\ln(p)}
\end{equation}
that $m$ is prime, for large $n$. 
(For small $p$, as in Section~\ref{sec:fixed-p2}, this last approximation could induce 
significant errors.)

If we choose the prime $n$ uniformly and randomly from the range $p\le n\le x$ 
for some large $x$ (so that most such $n$ are large as above), then the expected number of 
primes $m$ arising is the sum of the probabilities in~(\ref{prob2}), that is
\[\frac{e^{\gamma}}{\ln(p)}\sum_n\frac{\ln(n)}{n}\]
where the sum is over all primes $n$ such that $p\le n\le x$. Now
\[\sum_n\frac{\ln(n)}{n}\approx\ln(x)-\ln(p-1),\]
(see~\cite[Theorem~425]{HW}), so the expected number of primes $m$ is approximately
\begin{equation}\label{qfixed}
\frac{e^{\gamma}(\ln(x)-\ln(p-1))}{\ln(p)}\sim\frac{e^{\gamma}\ln(x)}{\ln(p)}\approx\frac{1.781\ln(x)}{\ln(p)}.
\end{equation}
Since this tends to $+\infty$ with $x$, we may expect to obtain infinitely many 
projective primes $m$ from any given prime $p\ge 2$. This estimate is compared with 
computational evidence in Section~\ref{sec:fixed-p2} (see Table~\ref{tab:fixed-p}).


\subsection{A warning}\label{warning} 

Invoking the independence of congruences modulo different primes in order to make 
heuristic estimates, as we did in Section~\ref{sec:fixed-n}, has previously generated 
controversy: for instance, Wagstaff discusses this in~\cite{Wag}, citing criticism by 
Lenstra in~\cite{Len}. This is best illustrated with P\'olya's discussion in~\cite{Pol} 
of the following well-known paradox.

Based on the type of argument used in Section~\ref{sec:fixed-n}, one can attempt a heuristic proof of the Prime Number Theorem. An integer $x$ is prime if and only if $x\not\equiv 0$ mod~$(r)$ for each prime $r\le x$. For each such $r$ this event has probability $(r-1)/r$, so by regarding these events as mutually independent, and by using Mertens's Theorem,  one might expect $x$ to be prime with probability
\begin{equation}\label{PNT1}
\prod_{{\rm prime}\,\,r\le x}\left(1-\frac{1}{r}\right)\sim\frac{\mu}{\ln(x)}\quad\hbox{as}\quad x\to\infty,
\end{equation}
where $\mu=e^{-\gamma}=0.561459\ldots$. However, the correct asymptotic probability is $1/\ln(x)$, so this argument underestimates the probability of $x$ being prime (and hence the values of the prime-counting function $\pi(x)$) by a factor of $\mu$. Of course, it is sufficient to eliminate prime factors $r\le x^{1/2}$, rather than $r\le x$, so this alternative approach gives a second estimate
\begin{equation}\label{PNT2}
\prod_{{\rm prime}\,\,r\le x^{1/2}}\left(1-\frac{1}{r}\right)\sim\frac{\mu}{\ln(x^{1/2})}=\frac{2\mu}{\ln(x)}\quad\hbox{as}\quad x\to\infty.
\end{equation}
This overestimates the correct probability by a factor of $2\mu = 1.122918\ldots$, that is, by about $12\%$. If, as suggested by P\'olya in~\cite{Pol}, one takes the product over all primes $r\le x^{\mu}$ then the correct formula is obtained. P\'olya confesses that it is not clear why what he calls this ``trick of the magic $\mu$"  works here (Wagstaff~\cite{Wag} calls it a ``fudge factor"), but he goes on to argue that mathematicians should imitate physicists by adapting their theories to fit experimental data when such paradoxes arise. Similar phenomena are discussed by P\'olya~\cite{Pol} in relation to prime pairs and their generalisations, and by Wagstaff~\cite{Wag} in relation to the distribution of divisors of Mersenne numbers. 

The great Russian mathematician Andre\u{\i} Kolmogorov used to mention the following episode
(the second author heard it directly from him). Kolmogorov was once present at a talk
given by a prominent Russian physicist. The latter, basing his reasoning on some physical
ideas, introduced the density of a probability distribution on a certain space. Then,
he integrated this density and obtained $\pi$. At this point, Kolmogorov used to say, 
I would conclude that we had got a contradiction, and therefore all the reasoning was
wrong. But the conclusion of the physicist was different. Thus, he said, we must divide
the initial formula for the density by $\pi$. It seems that P\'olya would rather line up
with the physicist.

In our case, an appropriate choice of prime factors of $m$ to avoid is also an intricate 
matter. For example, for $n=3$, as we will see later, in Section~\ref{primefields}
and Table~\ref{tab:n=3ratios}, formula~(\ref{n=3estimate}) overestimates the number of 
projective primes. But we know that, 
beside the ``small primes'' 2, 3 and 5, Lemma~\ref{SmallPrimes}
also forbids all primes of the form $r \equiv -1$ mod~$(6)$. However, even if 
we adjoin to the product 
$$
P(6)=\left( \left(1-\frac{1}{2}\right)\left(1-\frac{1}{3}\right)
\left(1-\frac{1}{5}\right) \right)^{-1}
$$
not all such corresponding terms but only $(1-1/11)^{-1}$, we will get $c_3'=33/16$ 
instead of $c_3=15/8$, and estimate (\ref{n=3estimate}) will be replaced with 
$33x/32\ln(x)^2$, which will overestimate the number of projective primes even more 
than (\ref{n=3estimate}) does. On the other hand, if we remove the factor $(1-1/5)^{-1}$
from $P(6)$, we will underestimate the desired number.

But this is not yet the end of the story. As we will see in 
Section~\ref{sec:empiric} and Figure~\ref{fig:comparison},
the ratio of our estimates to the true number of projective primes grows, that is,
the estimates grow faster than the numbers they are supposed to estimate. 
There are even reasons to believe the overestimate grows to infinity 
(though slowly). Obviously, 
the constant factor does not play any part in this process: the rate of growth of the
estimates depends only on the behavior of the function $x/\ln(x)^2$. Therefore, it is 
reasonable to suppose that, for very large $x$, we will have indeed to eliminate 5 from 
the set of forbidden primes. Note however that this process of eliminating
or adjoining forbidden primes is not based on any solid theoretical foundation: it is
purely empirical. Therefore, instead of making artificial choices of which primes to 
include in the product, it seems at least as reasonable to consider other functions 
instead of  $x/\ln(x)^2$. This will be done in Section \ref{sec:empiric}.


\section{Primality testing}\label{sec:testing}

Before presenting the experimental results aiming to support our main conjecture
(that there are infinitely many projective primes), let us briefly discuss two problems:
the factorization of integers into prime factors, and the testing of primality. The problems
are, evidently, related to each other, but there is an abyss between their complexities.

\subsection{Integer factorization in modern times}

According to \cite{AGLL}, in 1977, Ronald Rivest, in a letter to 
Martin Gardner, estimated that
\begin{quote}
``{\ldots} factoring a 125-digit number
which is the product of two 63-digit 
prime numbers would require at least 40 quadrillion years using the best factoring algorithm 
known, assuming that $a\cdot b\; {\rm mod}\,(c)$ could be computed in 1~nano\-second, for 
125-digit numbers $a$, $b$, and $c$."
\end{quote}
It sounded like a solemn chorus from Purcell's {\sl Dido and Aeneas}\/: 
``Never! Never! Never!''
The same year, Gardner \cite{Gardner} launched a challenge: it was proposed to factor
a 129-digit number which was the product of a 64-digit and a 65-digit prime. Apparently
hopeless, whatever the future progress in computer technology would be.

Subsequent years saw spectacular progress in factorization 
algorithms. Finally, 17 years later, in 1994, the above 129-digit 
number was successfully factored. The project involved some 600 volunteers, 
1600 computers, and six months of computation. An account may be found 
in \cite{AGLL}; the strange expression ``squeamish ossifrage'' in the title was the 
message encrypted using this number by the RSA cryptographic method.

We skip a number of important developments during the next 15 years and go directly to a
milestone of 2009. In December of that year, a 232-digit number was factored: it was a
product two 116-digit primes. This result was the outcome of two years of work by a team
of 13 researchers, and was crowned with a {\$}50\,000 prize. As stated in~\cite{RSA},
\begin{quote}
``The CPU time spent on finding these factors by a collection of parallel computers 
amounted approximately to the equivalent of almost 2000 years of computing on a 
single-core 2.2 GHz AMD Opteron-based computer."
\end{quote}
Certainly, 2000 years for a 232-digit number as compared to 40\,000\,000\,000\,000\,000 
years for a 125-digit one, is incredible progress. A convenient, and machine-independent 
measure of an effort in a large-scale computation is GHz-years; in the present case we 
have 4400 GHz-years of computing.

Ten more years have passed, and a 240-digit number was factored in November 2019,
and a 250-digit number in February 2020. And here lies the current frontier of the
capability of factoring algorithms. The next challenge, a 260-digit number, still waits
for its turn to be factored.

\subsection{Testing}

The above examples show how difficult, in practical terms, the problem of factorization 
can be. However, there exist algorithms which establish whether a given integer is prime 
or composite, and this without ever trying to factor it. The most well-known, and the most 
used in practice, is the Rabin--Miller algorithm~\cite{Rabin} (see also the compendium 
\cite{CLR}). In particular, it is implemented in the Maple command {\em isprime}. Let us
take the above-mentioned 260-digit number (which, we recall, is not yet factored) and see
how this command works. The computation is carried out on a very modest laptop.

\begin{figure}[htbp]
\includegraphics[scale=0.9]{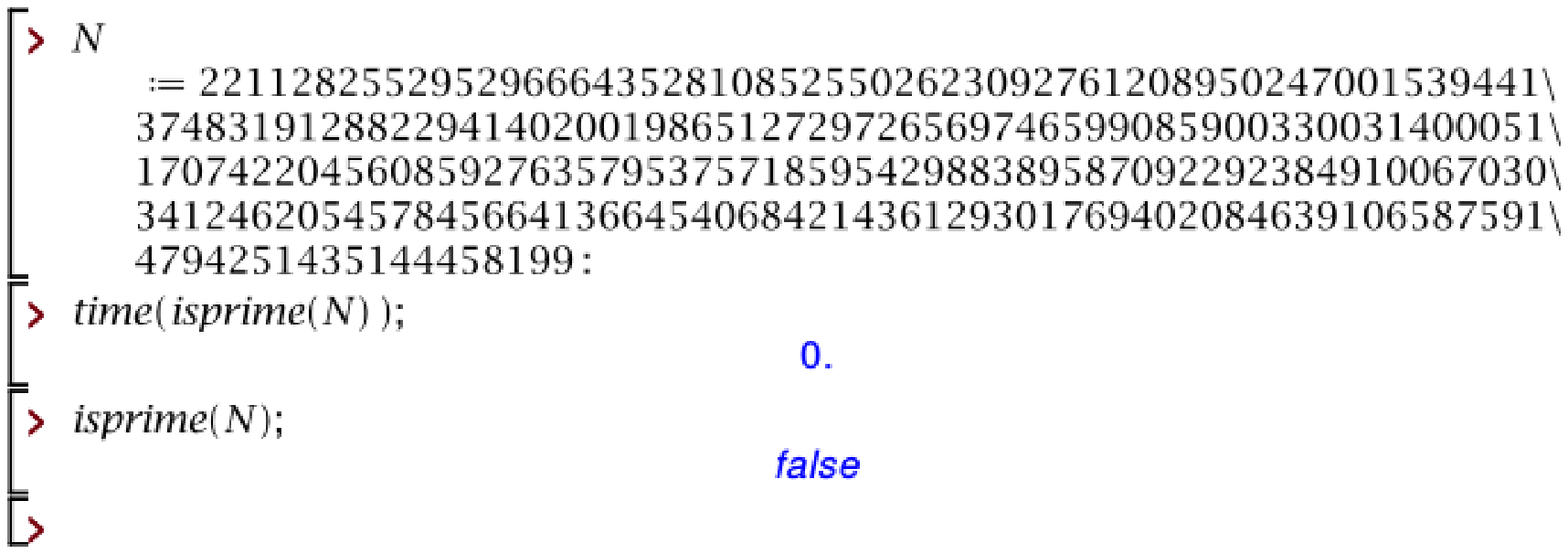}
\end{figure}

We see that the correct answer is given, almost literally, ``in no time at all''. 
In fact, the Maple time-counter outputs the CPU time within an accuracy of 0.001 seconds.
Therefore, 0. seconds time displayed in the above session means $<0.0005$ seconds, rounded
downwards.

Now consider a harder example, a 6153-digit number we will encounter in 
Section~\ref{1stseries}.

\begin{figure}[htbp]
\begin{flushleft}
\hspace*{4mm}
\includegraphics[scale=0.9]{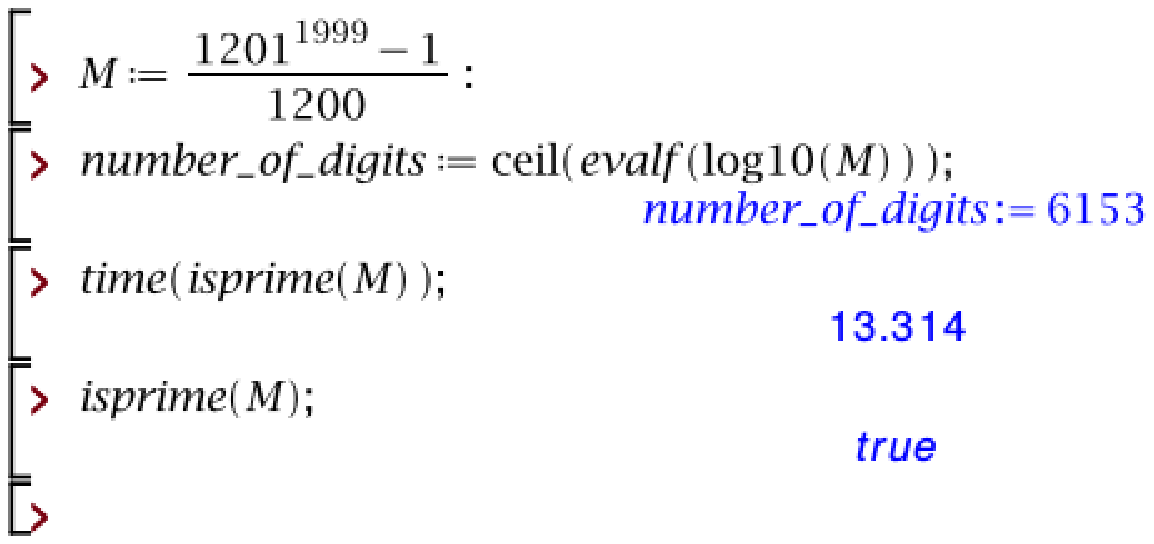}
\end{flushleft}
\end{figure}

This number is prime, and the computation took more than 13 seconds. A good result,
but to perform this testing on a large scale, that is, with large series of numbers, 
can turn out to be time-consuming.

\subsection{How the test works}

The Rabin--Miller algorithm is probabilistic. In order to determine whether a given 
number $m$ is prime it takes a random element $t\in\mathbb{Z}_m$ and verifies a necessary 
primality condition. The condition itself is simple, so we give it here.

The number $m-1$ is even; suppose it is equal to $m-1=(2l+1)\cdot 2^k$. Compute
in $\mathbb{Z}_m$
$$
a_0=t^{2l+1}, \quad \mbox{and then} \quad a_i=a_{i-1}^2 \quad \mbox{for} \quad
i=1,\ldots,k, \quad \mbox{so that} \quad a_k=t^{m-1}.
$$
If one of the following holds then $m$ is composite:
\begin{enumerate}
\item	While computing the sequence $a_i$, we come for the first time to $a_i=1$ 
		but the previous number $a_{i-1}\ne -1$. Indeed, in this case the equation 
		$a^2=1\;{\rm mod}\,(m)$ has, beside two obvious roots 1 and $-1$, a third 
		root $a_{i-1}$.
\item	We get $a_k=t^{m-1}\ne 1$. This contradicts Fermat's little theorem.
\end{enumerate}
Thus, if the test tells us that $m$ is composite then this statement is true, and no
probability is involved. If, however, neither of the two above conditions is satisfied,
we conclude that $m$ is {\em probably prime}. Rabin \cite{Rabin} showed that the
probability of an erroneous answer is bounded by~$1/4$; usually it is much smaller.
For large $m$, in the majority of cases this probability is infinitesimally small.
A dialogue from Gilbert and Sullivan's {\em I am the captain of the Pinafore}\/ comes
to mind: ``What, never? No, never. What, never? Well, hardly ever''. 
Nevertheless, in order to be on the safe side, the test is repeated many times with
different (random) values of~$t$. This, by the way, explains why the treatment of a
prime number takes much more time than that of a composite number of the same size.

Notice that raising a number $a$ to a power $a^r$ needs $O(\log r)$ arithmetic operations:
we compute first $a,a^2,a^4,a^8,\ldots$ (taking squares every time), and then multiply
the terms corresponding to the binary expression of the exponent $r$. Notice also that, 
in our case, all computations are made modulo $m$, so that the size of the numbers remains 
bounded.

\subsection{Polynomial-time algorithms}

The subject of primality testing and factorization has many ramifications. We only mention
very briefly a few of them. We recommend, for an interested reader, a very concise and
clear overview~\cite{Brent} and a more modern and advanced exposition in~\cite{FR}
(especially Chapter~5, ``Primality testing---an overview'').

There are several algorithms for primality testing whose complexity is polynomial in the 
size of tested numbers. However, for most of them the estimation of complexity is based
on some as yet unproved hypotheses.

\begin{nota}[Simplified measure of complexity]
Denote $k:=\log m$, and denote $\tO(k^s):=O(k^{s+\varepsilon})$ for all $\varepsilon>0$.
\end{nota}

This notation allows one to simplify complexity estimates for algorithms such as, for example,
the Sch\"onhage--Strassen algorithm of multiplication of long integers: we may now write 
just $\tO(k)$ instead of $O(k\cdot\log k\cdot \log\log k)$.

The complexity of the Rabin--Miller algorithm is $\tO(k^2)$: here $O(k)$ is the number
of arithmetic operations, and $\tO(k)$ is the complexity of an individual operation.

Four years before Rabin, Miller used the same test but in a deterministic way. Namely, 
it suffices to make the test for all $t\le 2\log(m)^2=2k^2$, {\em provided that the 
Extended Riemann Hypothesis is true}. Thus, this algorithm, of complexity is $\tO(n^4)$, 
while being deterministic, is based on an unproved conjecture. Also, the factor $2k^2$ 
is not innocuous. For $k\sim 6000$, as in the above example, it transforms seconds 
into years.

\begin{rema}[Are long computations reliable?]
It is important to note that in a long computation there is a significant probability 
of a hardware error. This probability is much greater than that in the Rabin--Miller test.
\end{rema}

In a revolutionary work \cite{Primes-in-P}, an {\em unconditional}\/ polynomial time 
algorithm for primality testing was given for the first time. Here `unconditional' means
that the estimate of its complexity does not depend on any unproved statement. After
several improvements its complexity is now established as $\tO(k^{15/2})$. It may also
be $\tO(k^6)$ if another as yet unproved conjecture is valid. Its theoretical impact 
is great but its practical utility is very limited.

Another method is based on the theory of elliptic curves. It is commonly known as the ECPP 
algorithm, which means Elliptic Curve Primality Proving. The names we must mention here 
are Sh.~Goldwasser, J.~Kilian, A.~Atkin and F.~Morain. This algorithm is probabilistic;
however, it is not of the ``Monte-Carlo type'' but of the ``Las Vegas type''. The latter means
that it always gives the correct answer; it is the computation time which is random. 
It is polynomial {\em on average}\/ if certain as yet unproved conjectures are true. 
Beside the correct answer, this algorithm also creates a {\em primality certificate}. 
A certificate is ``something'' which may be difficult to find but, once found, allows 
one to make a verification easily.

In \cite{Brent}, the following example is given. Consider the number 
$m=4405^{2638}+2638^{4405}$. It has 15\,071 digits. The {\em proof}\/ of its primality 
by the ECPP algorithm was achieved in 5.1~GHz-years. This is a truly remarkable 
result if we compare it with other error-free algorithms. Note, however, that the 
Rabin--Miller algorithm gives the correct (though unproved) answer in less than 
two minutes.

\subsection{A few comments}

Since 1980, when Michael Rabin published his algorithm, not a single case of an erroneous 
answer has been observed. Even financiers, in their cryptographic protocols, rely entirely on 
this test. However, a mathematical mind resists accepting a ``proof'' which in principle
might be wrong, even if the probability of such an event is infinitesimally small.
What then to do if we have doubts about the validity of the conclusion `prime' given 
by the probabilistic test? 
In our opinion, the most reasonable way to proceed is to run this test again once or 
twice. The test does not repeat exactly the same operations since it chooses different 
random elements of $\mathbb{Z}_m$ every time. In this way the probability $\alpha$ 
of an error, already infinitesimal, will be replaced with $\alpha^2$ or $\alpha^3$. 
(We may ask, rather provocatively: how many times can you repeat a two-minute test 
if you have 5.1 years at your disposal?)

And what if, by an incredible combination of chances, we take a composite number for 
a prime one? Well, let us recall that the aim of our particular study is to collect 
evidence that there are infinitely many projective primes. Therefore, one prime less 
or one prime more does not change much.

\section{Computational evidence}\label{Comp}

\subsection{Bunyakovsky's conjecture from an experimental perspective}
\label{sec:bun-experim}

The data in favor of the veracity of this conjecture abound. If we take, for example,
$f(t)=t^2+t+1$ and count the number of integers $t\le 10^7$ for which $f(t)$ is prime, 
we get 745\,582 solutions. A very ``modest'' particular case of Bunyakovsky's conjecture 
is known as Landau's conjecture: it concerns $f(t)=t^2+1$. In this case the number of
$t\le 10^7$ for which $t^2+1$ is prime is 456\,362. There is little doubt that,
at least in these two cases, the conjecture is true. No proof is, however, in view.

The main motivation of this note comes from group theory. Therefore, we will mainly 
consider not arbitrary values of $t$ but only prime powers $t=p^e$, $e\ge 1$,
and not arbitrary polynomials $f(t)$ but only those of the form
$$
f(t) = \frac{t^n-1}{t-1} = 1+t+t^2+\cdots+t^{n-1}.
$$

\begin{rema}[Terminological]
While speaking of {\em prime powers}, according to the context we may mean $p^e$ with
$e\ge 1$, that is, including ``pure'' primes, or, sometimes, with $e\ge 2$, in order
to put prime powers in contrast with the pure primes whose exponent is $e=1$.
\end{rema}

 
\subsection{First series of projective primes}\label{1stseries}

A computer search has revealed 
$668$ projective primes with $2\le q\le 2000$ and 
$3\le n\le 2000$, including one with $6153$ decimal digits, arising from $q=1201$ 
and $n=1999$. It is interesting to note that only five pairs $(q,n)$ out of $668$
correspond to prime powers $q=p^e$ with $e\ge 2$, namely,
$$
(q,n) \,=\, (2^3,3),\, (2^7,7),\, (2^9,3),\, (3^3,3),\, (11^3,3).
$$
All the other values of $q$ are ``pure'' primes.


\subsection{Number 31}\label{31}

A computer search of prime degrees up to $10^{12}$ reveals 
${\rm L}_3(5)$ and ${\rm L}_5(2)$ as the only pair of groups ${\rm L}_n(q)$ with 
the same natural degree in this range; it would be interesting to know whether any 
other such pairs exist. 

\begin{conj}[Number 31]\label{conj:31}
Beside\/ $31$, there are no other natural degrees common to two different projective 
groups ${\rm L}_n(q)$.
\end{conj}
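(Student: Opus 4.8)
The statement is exactly Goormaghtigh's equation
\[
\frac{q_1^{n_1}-1}{q_1-1}=\frac{q_2^{n_2}-1}{q_2-1}=m
\]
restricted to prime-power bases, so the plan is to peel off the cases that can be settled elementarily and to isolate the genuinely hard one. Write $q_i=p_i^{a_i}$ with $p_i$ prime and $a_i\ge 1$, and suppose $(n_1,q_1)\ne(n_2,q_2)$ with each $n_i\ge 2$. Since $(q^{n}-1)/(q-1)=1+q+\cdots+q^{n-1}$ is strictly increasing in $q$, the case $n_1=n_2$ would force $q_1=q_2$, a contradiction; so $n_1\ne n_2$, and relabelling we may take $n_1<n_2$. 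As the quantity is also strictly increasing in $n$, equality of the two values then forces $q_1>q_2$ (otherwise $m=\tfrac{q_1^{n_1}-1}{q_1-1}\le\tfrac{q_2^{n_1}-1}{q_2-1}<\tfrac{q_2^{n_2}-1}{q_2-1}=m$).

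First I would dispose of the case in which the two bases are powers of the same prime $p$, say $q_1=p^{a}$ and $q_2=p^{b}$. Here
\[
m=\sum_{i=0}^{n_1-1}p^{ai}=\sum_{j=0}^{n_2-1}p^{bj},
\]
and, the exponents in each sum being distinct, both are genuine base-$p$ expansions of $m$ with every digit $0$ or $1$. By uniqueness of the base-$p$ expansion the two sets of exponents carrying a $1$ coincide, so in particular $n_1=n_2$ --- a contradiction. This also disposes of $n_1=2$: there $m=1+q_1=(q_2^{n_2}-1)/(q_2-1)$ gives $q_1=q_2\bigl(1+q_2+\cdots+q_2^{\,n_2-2}\bigr)$, whence $q_2\mid q_1$ and the bases share a prime. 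Thus I may assume $p_1\ne p_2$ and $n_1,n_2\ge 3$.

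This remaining case is precisely Goormaghtigh's equation with both bases prime powers. Its only known solutions are $m=31$ (from bases $5$ and $2$) and $m=8191$ (from bases $90$ and $2$); the prime-power hypothesis discards $8191$, since $90=2\cdot 3^2\cdot 5$ is not a prime power, leaving only $31$. To prove there are no others I would first use Siegel's theorem (or the finiteness results of Davenport--Lewis--Schinzel) to reduce, for each fixed pair $(n_1,n_2)$, to finitely many effectively computable solutions; then bound the exponents and bases via the partial results on Goormaghtigh's conjecture obtained through linear forms in logarithms (Shorey--Tijdeman, Balasubramanian--Shorey, Nesterenko--Shorey, Bugeaud--Shorey); and finally try to exploit the prime-power hypothesis to force strong congruence constraints on $q_1,q_2$ modulo the primes $r\mid m$, using $r\equiv 1\pmod{2n_i}$ from Lemma~\ref{SmallPrimes} when $n_1,n_2$ are prime.

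The hard part --- indeed the whole difficulty --- is this last case, because it coincides with a prime-power-restricted form of Goormaghtigh's conjecture, which is open. The prime-power restriction conveniently removes the sporadic value $8191$, but it does not obviously collapse the general case to a finite check: the effective bounds furnished by Baker's method are far too large to search. So the realistic outcome of this plan is an unconditional proof in every case except ``different primes with $n_1,n_2\ge 3$'', together with a reduction of that case to Goormaghtigh's conjecture for prime-power bases, which I expect to be the main obstacle.
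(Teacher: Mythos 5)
You have not proved the statement, and you should be aware that the paper does not prove it either: it is stated as a \emph{conjecture}, supported only by a computer search of prime degrees up to $10^{12}$ and by the accompanying remark identifying it with Goormaghtigh's equation, whose only known nontrivial solutions are $31$ and $8191$ (the latter excluded here because $90$ is not a prime power) and which is itself open. So the irreducible gap in your proposal --- the case of distinct primes $p_1\ne p_2$ with $n_1,n_2\ge 3$ --- is exactly the open problem, and your proposed tools cannot close it: Davenport--Lewis--Schinzel gives finiteness only for each \emph{fixed} pair of exponents $(n_1,n_2)$, with no uniformity over the infinitely many pairs, and the Baker-type bounds from the Shorey school are, as you say yourself, far beyond any feasible search. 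Your final paragraph concedes this, so the proposal is an honest reduction rather than a proof.

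That said, your elementary reductions are correct and go a little beyond what the paper records. The monotonicity argument forcing $n_1\ne n_2$, the uniqueness-of-base-$p$-expansion argument eliminating the case $q_1=p^a$, $q_2=p^b$ (the exponents $0,a,2a,\dots$ are distinct, all digits are $0$ or $1$, so the exponent sets and hence the $n_i$ coincide), and the observation that $n_1=2$ forces $q_2\mid q_1$ and hence a common prime, are all sound, and they show cleanly that the conjecture is \emph{equivalent} to the prime-power-restricted Goormaghtigh conjecture with both exponents at least $3$ and coprime bases. That equivalence is implicit in the paper's remark but not spelled out there, so this part of your work is a useful complement. Just be clear in your write-up that what you have is a reduction to an open conjecture plus the paper's numerical evidence (no counterexample below $10^{12}$), not a proof.
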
 

\begin{rema}[Goormaghtigh conjecture]
The Diophantine equation
$$
\frac{x^n-1}{x-1}=\frac{y^k-1}{y-1}
$$
has been studied by many authors (see~\cite[Problem B25]{Guy}, for example).
In 1917, a Belgian engineer and amateur mathematician Ren\'e Goormaghtigh\footnote{See 
{\tt https://forvo.com/word/ren\%C3\%A9\_goormaghtigh} to learn how to pronounce this name.} 
(1893--1960) conjectured \cite{Goo} that this equation, 
for $n\ne k$, $n,k\ge 3$, has only two solutions in $\mathbb N$:
$1+2+4+8+16=1+5+25=31$ and $1+2+4+\cdots+2^{12}=1+90+90^2=8191$.
However, 90 is not a prime power, so that there 
is no field with 90 elements. By the way, the number 8191 is prime. Therefore, it is
an instance of Bunyakovsky's conjecture for two different polynomials (and certainly 
for many other ones, like $t^2+91$, for example), but it is a projective prime for
only one of them, namely, $1+t+t^2+\cdots+t^{12}$. 

In \cite{DLS}, it is proved that for fixed exponents  $n\ne k$, $n,k\ge 3$, there can 
be only a finite number of solutions. For additional information about this equation 
see \cite{Goo-wiki}. 
\end{rema}


\subsection{Projective planes over prime fields}\label{primefields}

Let us take only prime values $p$, not taking into account the prime powers $q=p^e$ 
with $e\ge 2$, let us fix $n=3$ and consider projective primes $m=1+p+p^2$. Our colleague 
Jean B\'etr\'ema examined all primes $p\le 10^{11}$ using the package
{\tt Primes.jl} of the language {\tt Julia}. It turns out that {\tt Julia}
is much more efficient than Maple for problems of this sort. We partially reproduce 
B\'etr\'ema's results in Table \ref{tab:betrema}. 

\begin{table}[htbp]
\begin{center}
\begin{tabular}{l|c|c|c|c}
\hspace*{11mm}Segment & \#(prime $p$) &
\#(prime $m$) & ratio & $\max p$ \\
\hline
\hspace*{9mm} $2,\ldots,\, 10^{10}$ 			  & 455\,052\,511    &
15\,801\,827 & 3.473\% & 9\,999\,999\,491  \\
\hspace*{4mm} $10^{10},\ldots,\, 2\cdot 10^{10}$  & 427\,154\,205    & 
13\,882\,936 & 3.250\% & 19\,999\,999\,757 \\
$2\cdot 10^{10},\ldots,\, 3\cdot 10^{10}$ 		  & 417\,799\,210    & 
13\,279\,095 & 3.178\% & 29\,999\,999\,921 \\
$3\cdot 10^{10},\ldots,\, 4\cdot 10^{10}$ 		  & 411\,949\,507    & 
12\,913\,713 & 3.135\% & 39\,999\,999\,719 \\
$4\cdot 10^{10},\ldots,\, 5\cdot 10^{10}$ 		  & 407\,699\,145    & 
12\,645\,233 & 3.102\% & 49\,999\,999\,619 \\
$5\cdot 10^{10},\ldots,\, 6\cdot 10^{10}$ 		  & 404\,383\,577    & 
12\,439\,618 & 3.076\% & 59\,999\,999\,429 \\
$6\cdot 10^{10},\ldots,\, 7\cdot 10^{10}$		  &	401\,661\,384    &
12\,274\,191 & 3.056\% & 69\,999\,999\,287 \\
$7\cdot 10^{10},\ldots,\, 8\cdot 10^{10}$		  &	399\,359\,707 	 &
12\,136\,112 & 3.039\% & 79\,999\,999\,679 \\
$8\cdot 10^{10},\ldots,\, 9\cdot 10^{10}$		  & 397\,369\,745	 &
12\,010\,780 & 3.023\% & 89\,999\,999\,981 \\
$9\cdot 10^{10},\ldots,\, 10^{11}$  			  & 395\,625\,822	 &
11\,910\,803 & 3.011\% & 99\,999\,999\,977 \\
\hline
\hspace*{13mm}Total 	  						  & 4\,118\,054\,813 &
129\,294\,308 & 3.140\% & 99\,999\,999\,977
\end{tabular}
\end{center}
\vspace{2mm}
\caption{The second column gives the number of primes in the corresponding
segment, while the third column gives the number of those primes $p$ which
create a projective prime $m=1+p+p^2$. The proportion of such primes among all the
primes of the second column is given in the fourth column.}
\label{tab:betrema}
\end{table}

We may see from this table that the number of primes $p\le 10^{11}$ which produce a 
prime value of $m$ is 129\,294\,308, the largest of them being 99\,999\,999\,977. 
The corresponding projective prime is $m=9\,999\,999\,995\,500\,000\,000\,507$.
Such primes $p$ represent approximately 3.140\% of the total number 
4\,118\,054\,813 of primes up to $10^{11}$.

Of course, this percentage diminishes together with the growth of the upper limit. 
For example, if we count the proportion of such primes up to $10^6$, we get
5.97\%. Nevertheless, it is quite reasonable to conjecture that even in this very 
restricted situation there are infinitely many projective primes.

\begin{table}[htbp]
\begin{center}
\begin{tabular}{c|c|c|c|c|c}
$x$ & \#(prime $m \mid p\le x$) & estimate (\ref{n=3estimate}) & ratio 
& estimate (\ref{eq:rectify}-\ref{eq:C-and-alpha}) & ratio \\
\hline
$1\cdot 10^{10}$ & 15\,801\,827 & $1.7683\times 10^7$  & 1.1190 
& $1.5799306\times 10^7$ & 0.999841 \\
$2\cdot 10^{10}$ & 29\,684\,763 & $3.3328\times 10^7$  & 1.1227 
& $2.9686686\times 10^7$ & 1.000065 \\
$3\cdot 10^{10}$ & 42\,963\,858 & $4.8096\times 10^7$  & 1.1195 
& $4.2969637\times 10^7$ & 1.000135 \\
$4\cdot 10^{10}$ & 55\,877\,571 & $6.2736\times 10^7$  & 1.1227 
& $5.5881270\times 10^7$ & 1.000066 \\
$5\cdot 10^{10}$ & 68\,522\,804 & $7.7239\times 10^7$  & 1.1272 
& $6.8526763\times 10^7$ & 1.000058 \\
$6\cdot 10^{10}$ & 80\,962\,422 & $9.1332\times 10^7$  & 1.1281 
& $8.0965961\times 10^7$ & 1.000044 \\
$7\cdot 10^{10}$ & 93\,236\,613  & $1.0524\times 10^8$ & 1.1287 
& $9.3237376\times 10^7$ & 1.000008 \\
$8\cdot 10^{10}$ & 105\,372\,725 & $1.1900\times 10^8$ & 1.1293 
& $1.0536780\times 10^8$ & 0.999953 \\
$9\cdot 10^{10}$ & 117\,383\,505 & $1.3262\times 10^8$ & 1.1298 
& $1.1737691\times 10^8$ & 0.999944 \\
$10^{11}$ & 129\,294\,308        & $1.4614\times 10^8$ & 1.1303 
& $1.2927974\times 10^8$ & 0.999887
\end{tabular}
\end{center}
\vspace{2mm}
\caption{The second column gives the cumulative totals from the second column in 
Table~\ref{tab:betrema}, i.e.~the number of projective primes $m$ with $n=3$ arising 
from primes $p\le x_i=i\cdot 10^{10}\;(i=1,\ldots, 10)$; the third column gives an 
approximation for the estimate for this number from Section~\ref{sec:fixed-n}, 
while the fourth column gives the ratio of these two numbers. The meaning of the
last two columns is explained in Section~\ref{sec:empiric}.}
\label{tab:n=3ratios}
\end{table}

Table~\ref{tab:n=3ratios} compares the numbers of projective primes $m=1+p+p^2$ for 
primes $p\le x_i=i\cdot 10^{10}$, $i=1,\ldots,10$, with the heuristic estimates given 
by (\ref{n=3estimate}) in Section~\ref{sec:fixed-n}. It can be seen that the latter 
are of the right order of magnitude, but that they consistently over-estimate the 
number of such primes by about $12\%$. 
In Section~\ref{sec:empiric} we present
another estimate. We do not have any theoretical bases to support it, only empirical
ones, but it approximates the values we need much better than the previous estimate.
The results of this estimate are represented in the two last columns of 
Table~\ref{tab:n=3ratios}.


\subsection{An empirical estimate vs.~the theoretical one: rectifying the anomaly}
\label{sec:empiric}

We see that for large $x$ estimate (\ref{n=3estimate}) systematically overestimates the 
number of projective primes. Hence, let us instead consider an estimate of the following 
form:
\begin{eqnarray}\label{eq:rectify}
y = \frac{C x}{\ln(x)^{\alpha}}
\end{eqnarray}
where the constants $C$ and $\alpha$ are to be found from empirical data.

Denote $z=x/y=\frac{1}{C}\ln(x)^{\alpha}$. Then, taking the logarithm of each side of
this equation we get 
$$
\ln(z)=-\ln(C)+\alpha\cdot\ln(\ln(x)).
$$
Thus, in the coordinates 
$$
u=\ln(\ln(x)), \qquad  v=\ln(z) 
$$ 
equation (\ref{eq:rectify}) takes the form of an equation of a straight line
$$
v = a+\alpha u \quad\mbox{ where }\quad a=-\ln(C).
$$

Our next steps are as follows:
\begin{enumerate}
\item	Take a number of pairs $(x_i,y_i)$ where $x_i$ are at our choice while $y_i$ 
		are the numbers of projective primes $m=1+p+p^2$ created from primes $p\le x_i$.
\item	Compute the corresponding pairs $(u_i,v_i)$.
\item	Put the points $(u_i,v_i)$ on the plane of the coordinates $(u,v)$, in the hope
		that they will be reasonably close to a straight line $v=a+\alpha u$.
\item	Find the equation of this straight line and thus obtain the values 
		of $C=e^{-a}$ and of $\alpha$.
\end{enumerate}

The points should be taken with care. As we will see in Section \ref{sec:stochastic},
numbers of projective primes behave rather randomly. Therefore, in order to get a 
reasonable estimate, the values of $x_i$ must be large enough, and the spaces between 
them must also be large. 

The numbers $x_i=i\cdot 10^{10}$, $i=1,\ldots,10$ satisfy both conditions. Taking $x_i$ 
and $y_i$ from the first two columns of Table~\ref{tab:n=3ratios} we get the results 
which are shown in Figure~\ref{fig:empiric-alpha}. We would say that they are even better 
than one might hope. 
The corresponding constants, found by the method of least squares, are as follows:
\begin{eqnarray}\label{eq:C-and-alpha}
\hspace*{10mm}
a = -0.150383694, \qquad C = e^{-a} = 1.162280117, \qquad \alpha = 2.104419156.
\end{eqnarray}
The estimates given by (\ref{eq:rectify}) with the constants (\ref{eq:C-and-alpha}), 
and their ratios to the true number of projective primes are shown in the last two 
columns of Table~\ref{tab:n=3ratios}.

\begin{rema}[Overestimate]
If the estimates (\ref{eq:rectify}-\ref{eq:C-and-alpha}) are correct, and it seems 
that they are, or at least if they are close to the correct asymptotic, then the 
{\em overestimate}\/ of the ratio given by formula (\ref{n=3estimate}) tends to infinity, 
though rather slowly: it is proportional to $\ln(x)^{\alpha-2}$.
\end{rema}

\begin{figure}[htbp]
\begin{center}
\includegraphics[scale=0.3]{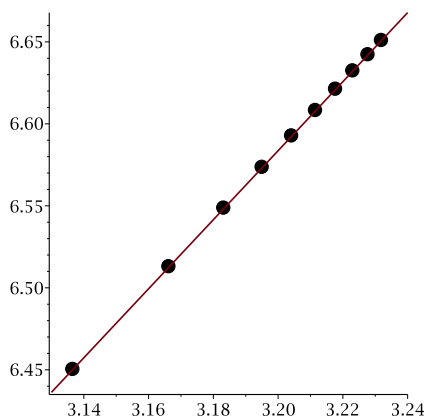}
\caption{The horizontal axis corresponds to the variable $u=\ln(\ln(x))$, the vertical
one to the variable $v=\ln(x/y)$ where $y$ is meant to count projective primes. The ten
distinguished points correspond to $x_i=i\cdot 10^{10}$, while $y_i$ is the number of
prime $p\le x_i$ such that $m=1+p+p^2$ is prime.}
\label{fig:empiric-alpha}
\end{center}
\end{figure}

\begin{figure}[htbp]
\begin{center}
\includegraphics[scale=0.3]{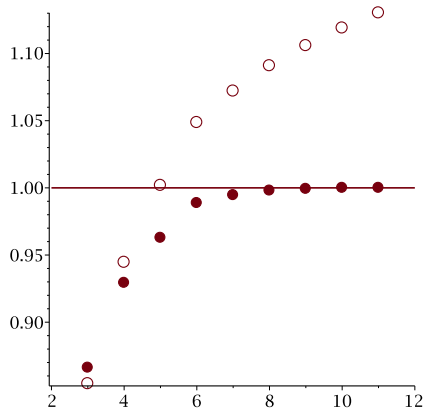}
\caption{Comparison of two estimates. The horizontal scale is logarithmic: an abscissa $k$
corresponds to the number $10^k$. White circles represent the ratios of estimates
(\ref{n=3estimate}) to the true numbers of projective primes $m=1+p+p^2$ obtained from 
the primes $p\le 10^k$, $k=3,\ldots,11$. The solid circles represent the similar ratios for 
estimates (\ref{eq:rectify}-\ref{eq:C-and-alpha}). Notice that for smaller numbers both
estimates {\em underestimate}\/ the number of projective primes (both black and white 
points are below the level 1).}
\label{fig:comparison}
\end{center}
\end{figure}

It would be interesting to understand the nature of the above constants. For example,
$\alpha$ is reasonably close to $1+2\mu=2.1229$ (the constant $\mu$ is defined in 
Section~\ref{sec:fixed-n}). It would be tempting to conjecture that they are equal, especially
since $\mu$ appears in several other conjectures related to prime numbers (see, for 
example, \cite{Pol}, and also the chapter on the Mersenne primes in \cite{PP}). But no:
experience shows that the estimate of the exponent $\alpha$ diminishes when the
the bound $x$ grows. And let us not forget that for the time being we are unable to 
prove even that there are infinitely many projective primes, to say nothing of their 
asymptotic behavior.


\subsection{When the exponent $n$ grows}

We considered the prime exponents $n\le 100$ and counted the number of primes $p\le 10^6$
(omitting prime powers) such that $m=(p^n-1)/(p-1)$ is also prime. The results are
presented in Table~\ref{tab:growing-n}: $N$ denotes the number of primes $p$ with the
above property, and $\max p$ is the largest $p\le 10^6$ which, for a given $n$, produces
a prime value of $m$. The total number of primes up to one million is 78\,498. The proportion
of ``good'' primes thus varies (in our table) from 6\% (for $n=3$) to 0.5\% (for $n=83$).
There is a general tendency for this proportion to decrease, but without any apparent
regularity.

\begin{table}[htbp]
\begin{center}
\begin{tabular}{c|c|c||c|c|c||c|c|c||c|c|c}
$n$ & $N$  & $\max p$ & 
$n$ & $N$  & $\max p$ & 
$n$ & $N$  & $\max p$ & 
$n$ & $N$  & $\max p$ \\
\hline
 3  & 4684 & 999\,773 & 
19  & 2933 & 999\,067 & 
43  & 1119 & 999\,961 & 
71  &  848 & 999\,907 \\
 5  & 4034 & 999\,653 & 
23  & 1150 & 999\,287 & 
47  & 1212 & 999\,491 & 
73  &  577 & 999\,307 \\
 7  & 4436 & 999\,961 & 
29  & 1032 & 998\,111 & 
53  &  694 & 999\,007 & 
79  &  689 & 996\,811 \\
11  & 2243 & 999\,631 & 
31  & 1980 & 997\,463 & 
59  & 1106 & 999\,953 & 
83  &  390 & 993\,557 \\
13  & 2658 & 999\,863 & 
37  & 1285 & 999\,269 & 
61  &  913 & 999\,763 & 
89  &  430 & 995\,339 \\
17  & 2527 & 999\,287 & 
41  &  862 & 999\,233 & 
67  &  821 & 999\,727 & 
97  &  571 & 998\,471 
\end{tabular}
\end{center}
\vspace{2mm}
\caption{For a given prime exponent $n\le 100$, the number $N$ shows how many primes 
$p\le 10^6$ there are such that $m=(p^n-1)/(p-1)$ is also prime. The column ``$\max p$'' 
shows the largest such $p$.}\label{tab:growing-n}
\end{table}

We do not present the corresponding projective primes $m$ since their decimal 
representations are too long: for example, the number $m$ corresponding to the last 
cell of the table, namely, \linebreak
$m=(998\,471^{97}-1)/998\,470$, has 576 digits.

The following conjecture seems quite reasonable:

\begin{conj}[Projective primes for a fixed $n$]
For any fixed prime $n\ge 3$ there are infinitely many prime values $m=1+p+p^2+\ldots+p^{n-1}$,
where $p$ ranges over all prime numbers.
\end{conj}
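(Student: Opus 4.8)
The plan is to recast the statement as the two-polynomial instance of Schinzel's Hypothesis~H~\cite{SS} attached to the pair
\[
g_1(t) = t, \qquad g_2(t) = 1 + t + \cdots + t^{n-1} = \frac{t^n-1}{t-1}.
\]
Since $g_1(t)=t$ is prime exactly when $t$ is prime, an integer $t$ at which both $g_1(t)$ and $g_2(t)$ take prime values is precisely a prime $p$ for which $m = 1+p+\cdots+p^{n-1}$ is prime. Thus the conjecture follows verbatim from Hypothesis~H once its hypotheses are verified for this pair, and the substantive content reduces to checking those hypotheses and then confronting what an \emph{unconditional} argument would require.

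The first two hypotheses are immediate: both $g_i$ have positive leading coefficient; $g_1$ is trivially irreducible, and $g_2$ is irreducible over $\mathbb{Z}$ because $n$ is prime, as recalled in Remark~\ref{rem:n-prime}. The real hypothesis is that the product $g_1(t)g_2(t) = t\,g_2(t)$ has no fixed prime divisor, i.e.\ for every prime $r$ there is a residue $t \bmod r$ with $r \nmid t\,g_2(t)$. Here Lemma~\ref{SmallPrimes} does exactly the required work. It shows that $r \mid g_2(t)$ can hold for some $t \not\equiv 0$ only if $r \equiv 1 \bmod (2n)$ or $r = n$. In the first case $r \ge 2n+1$, while $g_2$ has only its $n-1$ roots modulo $r$; together with the single class $t \equiv 0$ this forbids at most $n \le r-1$ residues. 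In the case $r = n$ one has $g_2(t) \equiv (t-1)^{n-1} \bmod (n)$, so only $t \equiv 1$ is excluded, and with $t \equiv 0$ this is $2 < n$ forbidden classes. For every other prime $r$ one has $g_2(t) \not\equiv 0$ whenever $t \not\equiv 0$, so only $t \equiv 0$ is forbidden. In every case a surviving residue exists, so $t\,g_2(t)$ has no fixed prime divisor and all hypotheses of Hypothesis~H are met; its conclusion is exactly the conjecture, and the associated singular series reproduces the heuristic count of Section~\ref{sec:fixed-n}.

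Granting Hypothesis~H the proof is therefore complete, but the honest part of this plan is to explain why the remaining input cannot at present be supplied. For an unconditional argument one would instead evaluate the weighted sum $\sum_{p \le x} \Lambda\!\left(g_2(p)\right)$ over primes $p$, opening $\Lambda(g_2(p))$ by a Vaughan-type identity and splitting it into Type~I sums, governed by the equidistribution of $g_2(p)$ in arithmetic progressions, and Type~II bilinear sums. The main obstacle, and the reason the statement remains a conjecture, is the parity problem of sieve theory compounded by the sparsity of the sequence: since $g_2$ has degree $n-1 \ge 2$, the values $g_2(p)$ are far sparser than the interval they occupy, and Type~II estimates of the quality needed to detect genuine primes, rather than almost-primes, lie beyond all present techniques. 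Even the single-polynomial case, Bunyakovsky's conjecture for $g_2$, is open for every prime $n \ge 3$; so any candidate plan must flag this bilinear estimate, not any routine verification, as the true difficulty.
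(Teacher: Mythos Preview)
The statement is a conjecture, and the paper offers no proof; it supports the claim with heuristic density estimates (Section~\ref{sec:fixed-n}, culminating in~(\ref{prob8})) and computational data (Table~\ref{tab:growing-n}). You take a different and more structural line: you recast the conjecture as the two-polynomial case of Schinzel's Hypothesis~H for the pair $(t,\,1+t+\cdots+t^{n-1})$, verify carefully---using Lemma~\ref{SmallPrimes} and the factorisation $g_2(t)\equiv(t-1)^{n-1}\bmod n$---that the product $t\,g_2(t)$ has no fixed prime divisor, and then explain why the remaining bilinear estimate is out of reach. That verification is correct and is not carried out in the paper, which only mentions Hypothesis~H in passing in Section~\ref{BunConj}. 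What your approach buys is a clean conditional implication (the conjecture follows from~H, and quantitatively from Bateman--Horn); what the paper's approach buys is an explicit predicted constant that can be compared with the numerics, as in Table~\ref{tab:n=3ratios}. One caveat: your remark that the associated singular series ``reproduces the heuristic count of Section~\ref{sec:fixed-n}'' is true only in order of magnitude; the paper's constant $C_n=c_n(n-2)/(n-1)$ comes from a cruder local argument than the Bateman--Horn product, and Section~\ref{warning} is devoted precisely to the discrepancies such shortcuts can produce.
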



\subsection{Projective primes with a fixed $p$}\label{sec:fixed-p2}

What if we fix $p$ and allow $n$ to tend to infinity (taking only prime values),
as in Section~\ref{sec:fixed-p1}? Here the computational evidence, presented in Table~\ref{tab:fixed-p}, is less convincing, which is not surprising given the small number of primes $m$ involved.

\begin{table}[htbp]
\begin{center}
\begin{tabular}{c|c|c|l}
$p$ & estimation (\ref{qfixed}) & true number & exponents $n$ \\
\hline
3  & 13.832 & 12 & $3,7,13,71,103,541,1091,1367,1627,4177,9011,9551$ \\
5  & 8.669  & 11 & $3,7,11,13,47,127,149,181,619,929,3407$ \\
7  & 6.796  & 5  & $5,13,131,149,1699$  \\
11 & 5.136  & 9  & $17,19,73,139,907,1907,2029,4801,5153$ \\
13 & 4.675  & 9  & $5,7,137,283,883,991,1021,1193,3671$ \\
17 & 4.052  & 7  & $5,7,11,47,71,419,4799$ \\
\hline
\end{tabular}
\vspace{2mm}
\caption{The last column gives the list of exponents $n\le 10^4$ such that the number
$m=(p^n-1)/(p-1)$ is prime. The number of such exponents is given in column~3,
while the estimation of this number by formula (\ref{qfixed}) is presented in column~2.}
\label{tab:fixed-p}
\end{center}
\end{table}

For the primes $p=3,5,7,11, 13$ and $17$, the second column of Table~\ref{tab:fixed-p} 
gives the estimates based on the first expression in (\ref{qfixed}) for the number of 
primes $m=(p^n-1)/(p-1)$ with $n\le x=10^4$. The third column gives the true figures, 
found by a computer search, and the relevant exponents $n$ are listed in the fourth column.

We see that the estimation (\ref{qfixed}) of the number of exponents has a reasonably 
good correspondence with their actual number. However, we have a feeling that the above
data are not entirely convincing. For example, there are only three exponents in the
table, out of 53, which are greater than 5000. Also, for $p=3$, the next ``good'' exponent, 
after the one given in the table, is rather far away: $n=36\,913$. Therefore, in this 
case we prefer to formulate not a conjecture but a question:

\begin{ques}[Generalized Mersenne]
Let $p$ be a prime. Do there exist infinitely many values of~$n$ such that the number
$m=(p^n-1)/(p-1)$ is prime?
\end{ques}


\subsection{Prime powers once again}\label{sec:primepowersagain} 

We now consider fixed prime powers $q=p^e$ with $e\ge 2$, as $n\to\infty$. By 
Remark~\ref{re:e=2}, apart from the example $q=2^2$ and $m=5$ we can ignore the 
case $e=2$. An extensive search of prime powers producing projective primes has 
given the following results: 

\medskip

\noindent
$p=2$: the search for $q=2^e$, $e\ge 2$, producing projective primes,
		up to $q\le 10^{60}$, gives eight solutions: 
		\begin{itemize}
		\item	There are four solutions for which $m=1+q$\, is a Fermat prime: 
				$$
				(q,n)=(2^2,2),\,(2^4,2),\,(2^8,2),\,(2^{16},2)
				$$ 
				(the other known Fermat prime $m=1+2^1=3$ does not correspond to 
				$e\ge 2$).
		\item	There are three relatively small solutions: $(q,n)=(2^3,3),(2^7,7),(2^9,3)$.
		\item	A rather unexpected solution is $(q,n)=(2^{59},59)$. The corresponding
				projective prime
				$$
				m = 1 + 2^{59} + 2^{118} + \cdots + 2^{59\cdot 58}
				$$
				has 1031 digits.
				It is generally believed that there are only five Fermat primes.
				However, this example prevents us from conjecturing that there are
				only finitely many powers of~2 which yield projective primes.
		\end{itemize}

\medskip

\noindent
$p=3$: the search for $q=3^e$, $e\ge 3$, producing projective primes,
		up to $q\le 10^{60}$, gives only one solution: $(q,n)=(3^3,3)$, $m=1+27+27^2=757$. 
		We are inclined to believe that for $p=3$ this solution is unique.
		
\medskip

\noindent
$q\le 10^{15}$: the total search for all prime powers $q\le 10^{15}$ 
		with $e\ge 3$ producing projective primes gives 337 solutions. 
		Only eight of them have the exponent $e>3$, namely,
		$$
		(q,n)=(5^7,7),\,(11^9,3),\,(43^5,5),\,(67^7,7),\,(167^5,5),\,(313^5,5),\,(509^5,5),\,
		(859^5,5).
		$$
		For all the other 329 solutions $q$ is the cube of a prime.
		
\medskip

\noindent
$q=p^3\le 10^{18}$: the total search for cubes of primes up to
		$10^{18}$ reveals 2121 solutions, the largest one being $p=999\,953$, 
		$q = p^3 = 999\,859\,006\,626\,896\,177$, and
		$$
		m = 1+q+q^2 = 999\,718\,033\,132\,923\,614\,193\,697\,947\,364\,111\,507.
		$$

\medskip

The following conjecture seems to be very plausible:

\begin{conj}[Cubes of primes]
There are infinitely many values of $q=p^3$, with $p$ being prime, such that
$m=1+q+q^2$ is prime.
\end{conj}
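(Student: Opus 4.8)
The plan is to recast the conjecture as a simultaneous primality statement and then invoke the only framework strong enough to reach it, namely Schinzel's Hypothesis~H~\cite{SS}. First I would rewrite $m$ using the cyclotomic factorisation of $p^9-1$: since $q=p^3$, we have
\[m=\frac{q^3-1}{q-1}=\frac{p^9-1}{p^3-1}=p^6+p^3+1=\Phi_9(p),\]
the $9$th cyclotomic polynomial evaluated at $p$. Thus the conjecture asserts exactly that the two polynomials $f_1(t)=t$ and $f_2(t)=t^6+t^3+1$ take prime values simultaneously for infinitely many integers $t$ (the condition that $f_1(t)=t$ be prime forcing $t=p$). This is a two-polynomial instance of Hypothesis~H, and the heuristic machinery of Section~\ref{Heuristic} is precisely its quantitative (Bateman--Horn) shadow.

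The one genuinely checkable ingredient is the local admissibility of the pair $(f_1,f_2)$, which I would verify directly. Writing $F(t)=f_1(t)f_2(t)=t(t^6+t^3+1)$ and $\omega(r)=\#\{\,n\bmod r:F(n)\equiv 0\,\}$, admissibility requires $\omega(r)<r$ for every prime $r$. Since $f_2(0)=1$, the root $n\equiv 0$ of $f_1$ is never a root of $f_2$, so $\omega(r)=1+\rho(r)$, where $\rho(r)$ counts the roots of $\Phi_9$ modulo $r$. For $r\ne 3$ the polynomial $\Phi_9$ has a root mod $r$ only when $r\equiv 1\pmod 9$, in which case it splits completely and $\rho(r)=6$; but then $r\ge 19$, so $\omega(r)=7<r$. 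For $r=3$ a direct check leaves the single root $n\equiv 1$, so $\omega(3)=2<3$, and for every other $r$ one has $\omega(r)=1<r$. This sharpens Lemma~\ref{SmallPrimes} in our special case: for $q=p^3$ the prime divisors of $m$ are in fact congruent to $1$ modulo $9$ (or equal to $3$ when $p\equiv 1\pmod 3$). With admissibility in hand, Hypothesis~H yields the infinitude, while Bateman--Horn sharpens it to
\[\#\{\,p\le x:\Phi_9(p)\ \text{prime}\,\}\ \sim\ \frac{C}{6}\int_2^x\frac{dt}{(\ln t)^2},\qquad C=\prod_r\frac{1-\omega(r)/r}{(1-1/r)^2},\]
the factor $6$ coming from $\deg f_2=6$; the right-hand side diverges, in agreement with the estimates of Section~\ref{Heuristic}.

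The hard part is everything after admissibility: there is no unconditional proof of Hypothesis~H, nor even of the Bunyakovsky Conjecture for a single polynomial of degree $\ge 2$ --- as recalled in Section~\ref{BunConj}, the case $f(t)=t^2+1$ is already open. Here the difficulty is compounded in two ways. We demand that the \emph{input}\/ $p$ be prime, which turns a single-polynomial question into a simultaneous one of twin-prime flavour, for which the parity obstruction defeats all present sieve methods: a Brun--Selberg sieve, or an Iwaniec-type argument, could at best show that $\Phi_9(p)$ has boundedly many prime factors for infinitely many primes $p$, never that it is actually prime. Moreover, as the cautionary discussion of Section~\ref{warning} makes plain, even the heuristic constant $C$ is a delicate object, so one should not expect the naive Euler product to be the final word. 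Consequently I would offer the argument above as a conditional proof under Hypothesis~H, and flag the unconditional statement as lying beyond present technique.
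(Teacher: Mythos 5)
You should first be clear that the statement you were given is a \emph{conjecture}: the paper offers no proof, only the computational census of Section~\ref{sec:primepowersagain} (2121 prime cubes $q=p^3\le 10^{18}$ yielding prime $m$) together with the generic probabilistic heuristics of Section~\ref{Heuristic}, which are never specialised to the family $q=p^3$. Your proposal is therefore not ``the same approach'' but a genuinely different, and in some ways sharper, route: the identity $m=(p^9-1)/(p^3-1)=p^6+p^3+1=\Phi_9(p)$ is correct, the reduction to a two-polynomial instance of Hypothesis~H for the admissible pair $\bigl(t,\,t^6+t^3+1\bigr)$ is the right formalisation, and your local computation is accurate: for $r\ne 3$ the polynomial $\Phi_9$ has roots mod $r$ exactly when $r\equiv 1\pmod 9$ (then six of them), and $\omega(3)=2$, so $\omega(r)<r$ always. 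This genuinely refines Lemma~\ref{SmallPrimes} for this family --- prime divisors of $m$ are $\equiv 1\pmod 9$, hence $\equiv 1\pmod{18}$ since $m$ is odd, or equal to $3$ --- which means that the constant one would extract from the recipe of Section~\ref{sec:fixed-n} for $q=p^3$ differs from the generic $c_3=15/8$ (primes such as $7$ and $13$, which can divide $1+q+q^2$ for general $q$, can never divide $\Phi_9(p)$), and your Bateman--Horn constant packages this correctly. What your approach buys is a precise conditional statement and a principled asymptotic; what it cannot buy, as you rightly flag, is an unconditional proof, since Hypothesis~H is open even for a single polynomial of degree $\ge 2$ and the parity obstruction blocks sieve methods from producing primes rather than almost-primes. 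So your write-up is correct as a conditional argument and as supporting heuristic, but it leaves the conjecture exactly where the paper leaves it: unproved.
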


Since we did not find any examples where the same prime power $q$ yields more than one 
projective prime, we ask:

\begin{ques}[Generalized Mersenne]
Does there exist a proper prime power $q$ such that the number
$m=(q^n-1)/(q-1)$ is prime for more than one value of $n$?
\end{ques}

Of course, Table~\ref{tab:fixed-p} in Section~\ref{sec:fixed-p2} gives a number of examples of this phenomenon where $q=p$ is prime.


\section{Stochastic behavior of projective primes}
\label{sec:stochastic}

In this section, we present a number of observations concerning the stochastic
behavior of the (numbers of) projective primes. Our feeling is that this subject,
while being excitingly interesting, is not yet ready for a profound statistical 
analysis. However, we would like to share our observations with the community of 
specialists in probabilistic number theory in the hope that they may clarify 
certain points of our study.

We deal here exclusively with projective primes of the form $m=1+p+p^2$, where $p$ is prime.


\subsection{Local estimates}
\label{sec:segments}

Let $a,b$ be two integers, $a<b$. Then, according to (\ref{eq:rectify}-\ref{eq:C-and-alpha}), 
the primes $p\in[a,b]$ should create, approximately,
\begin{equation}\label{eq:est-by-segm}
C\cdot\left(\frac{b}{\ln(b)^{\alpha}}-\frac{a}{\ln(a)^{\alpha}}\right)
\end{equation}
projective primes of the type $m=1+p+p^2$, where $C = 1.162280117$ and 
$\alpha = 2.104419156$. In Figure~\ref{fig:est-by-segm}, left, we 
consider the primes $p\le 10^9$. We subdivide this range into $10^4$ segments
$S_i=[(i-1)\cdot 10^5, i\cdot 10^5]\;(i=1, 2, \ldots, 10^4)$ of equal size $10^5$. 
Horizontally, we mark the order number $i$ of a segment (from 1 to $10^4$). 
For each of these segments, we divide estimate (\ref{eq:est-by-segm}) by the true 
number of projective primes $m=1+p+p^2$ for $p\in S_i$; this ratio is the ordinate 
of the corresponding point in the picture. The picture thus contains $10^4$ points.

On the right of Figure \ref{fig:est-by-segm}, the construction is similar, but now the
range considered is $p\le 10^{10}$, and the length of each of the $10^4$ segments
is $10^6$.

\begin{figure}[htbp]
\begin{center}
\includegraphics[scale=0.35]{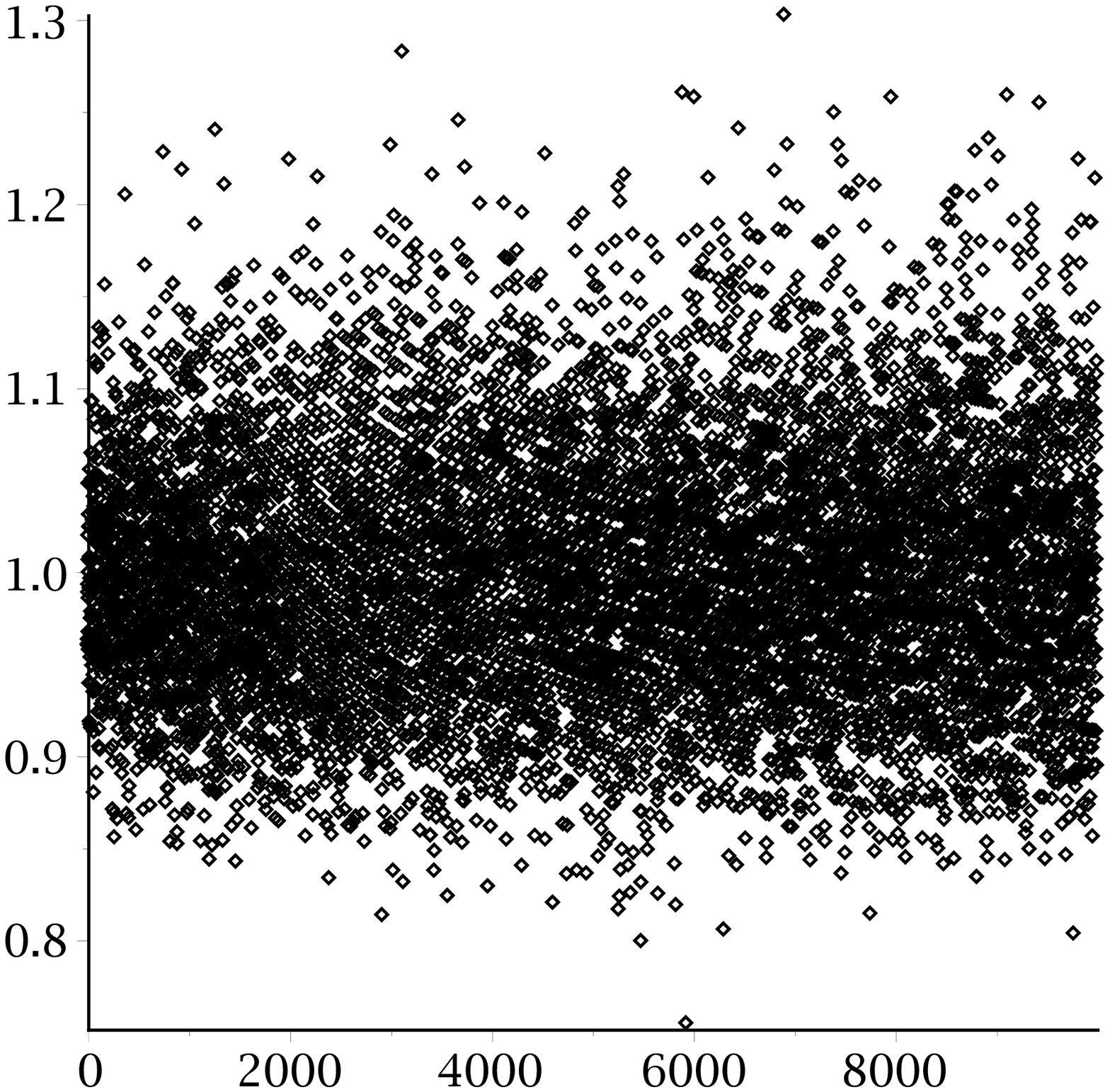}
\hspace{1cm}
\includegraphics[scale=0.35]{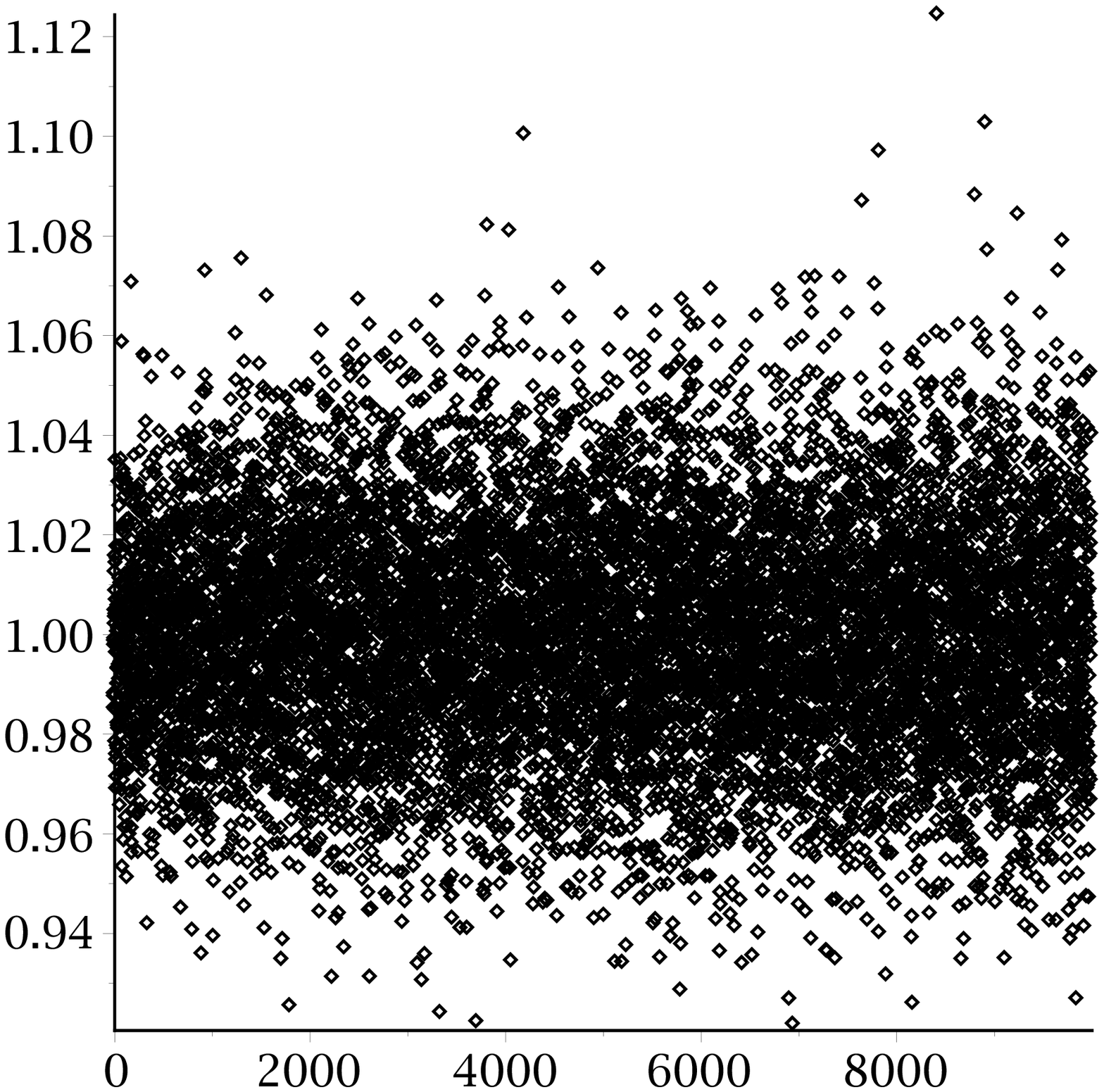}
\caption{Both pictures contain $10^4$ points. Each point corresponds to a segment 
in $\mathbb{N}$ of length $10^5$ (left) or $10^6$ (right). The abscissa of a point
is the order number of the corresponding segment. The ordinate is the ratio of the
estimate (\ref{eq:est-by-segm}) to the true number of projective primes
$m=1+p+p^2$ generated by primes $p$ in this segment.}
\label{fig:est-by-segm}
\end{center}
\end{figure}

We may make the following observations.\ 
\begin{itemize}
\item	There are wild fluctuations in the ratio. Obviously, they are due not to 
		estimate (\ref{eq:est-by-segm}) itself but to the fluctuations in the numbers 
		of projective primes $m=1+p+p^2$ with $p$ belonging to the corresponding 
		segments.
\item	Comparing the vertical scales shows that the right-hand band of points is 
		narrower than the left-hand one. This is
		natural since considering larger segments leads to smoothing the fluctuations.
\item	The interesting fact is, however, that the variations in both pictures do not
		diminish when we let $i$ increase. We may even say that they increase.
\end{itemize}


\subsection{Histogram}\label{sec:histo}

Let us take the right-hand picture of Figure~\ref{fig:est-by-segm}. The minimum value
of the ordinate (i.\,e., of the ratio) in this picture is $r_{\rm min}=0.9217$, the
maximum is $r_{\rm max}=1.1244$. We subdivide the segment $[r_{\rm min},r_{\rm max}]$
into 100 parts and count the number of points whose ordinates belong to each part.
The resulting histogram is shown in Figure~\ref{fig:density}, left.

The mean of this distribution is $46.66$, the standard deviation is $13.66$.
On the right of the same figure we show the density of the normal distribution with
the same parameters. Note that the height of the left picture, which is 300 points out
of 10\,000, corresponds well to the height of the density, which is approximately 0.03.

The resemblance of the two graphs is visible. We leave it to the specialists to use,
if necessary, more sophisticated statistical tools.

\begin{figure}[htbp]
\begin{center}
\includegraphics[scale=0.3]{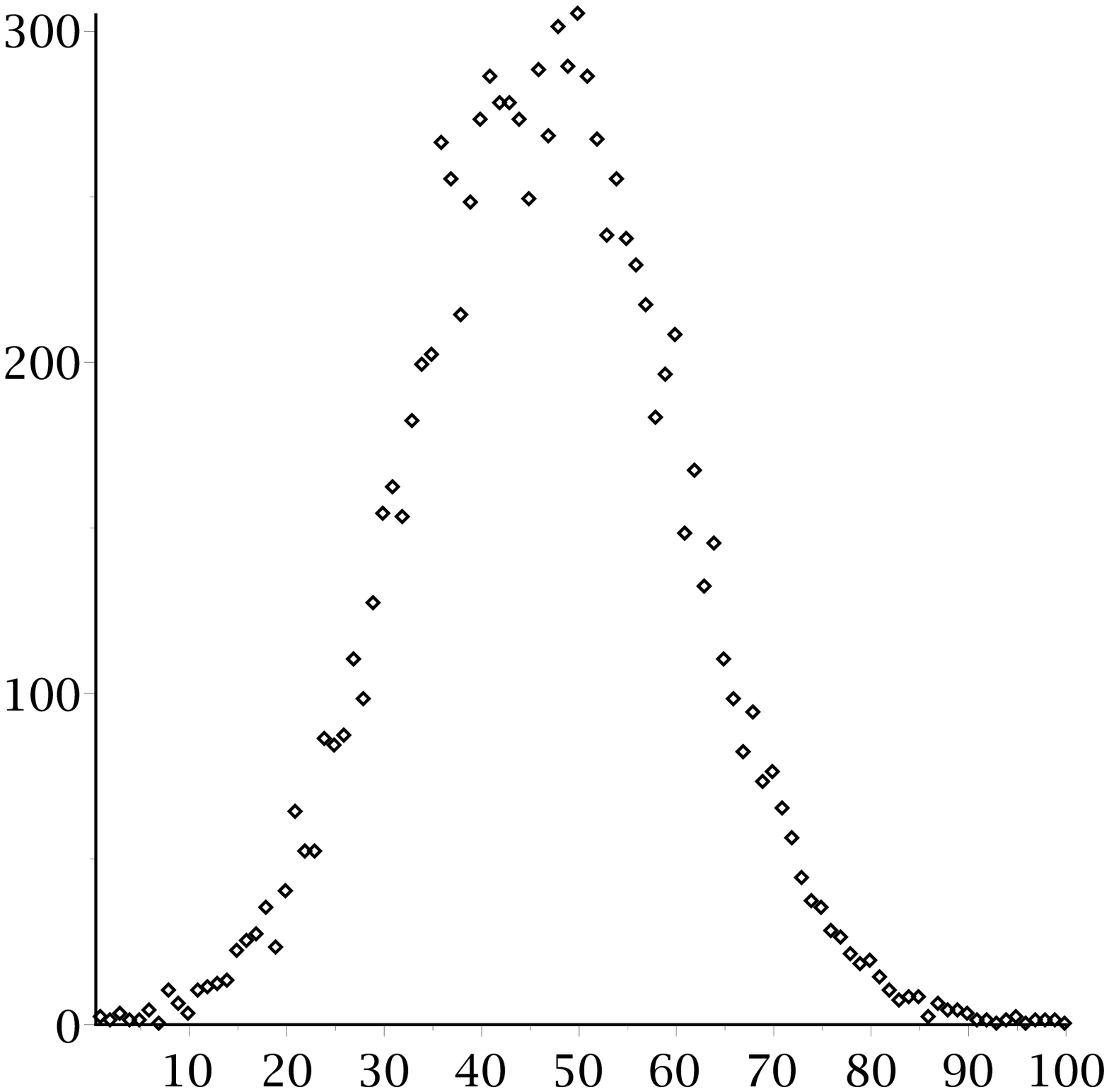}
\hspace{1cm}
\includegraphics[scale=0.3]{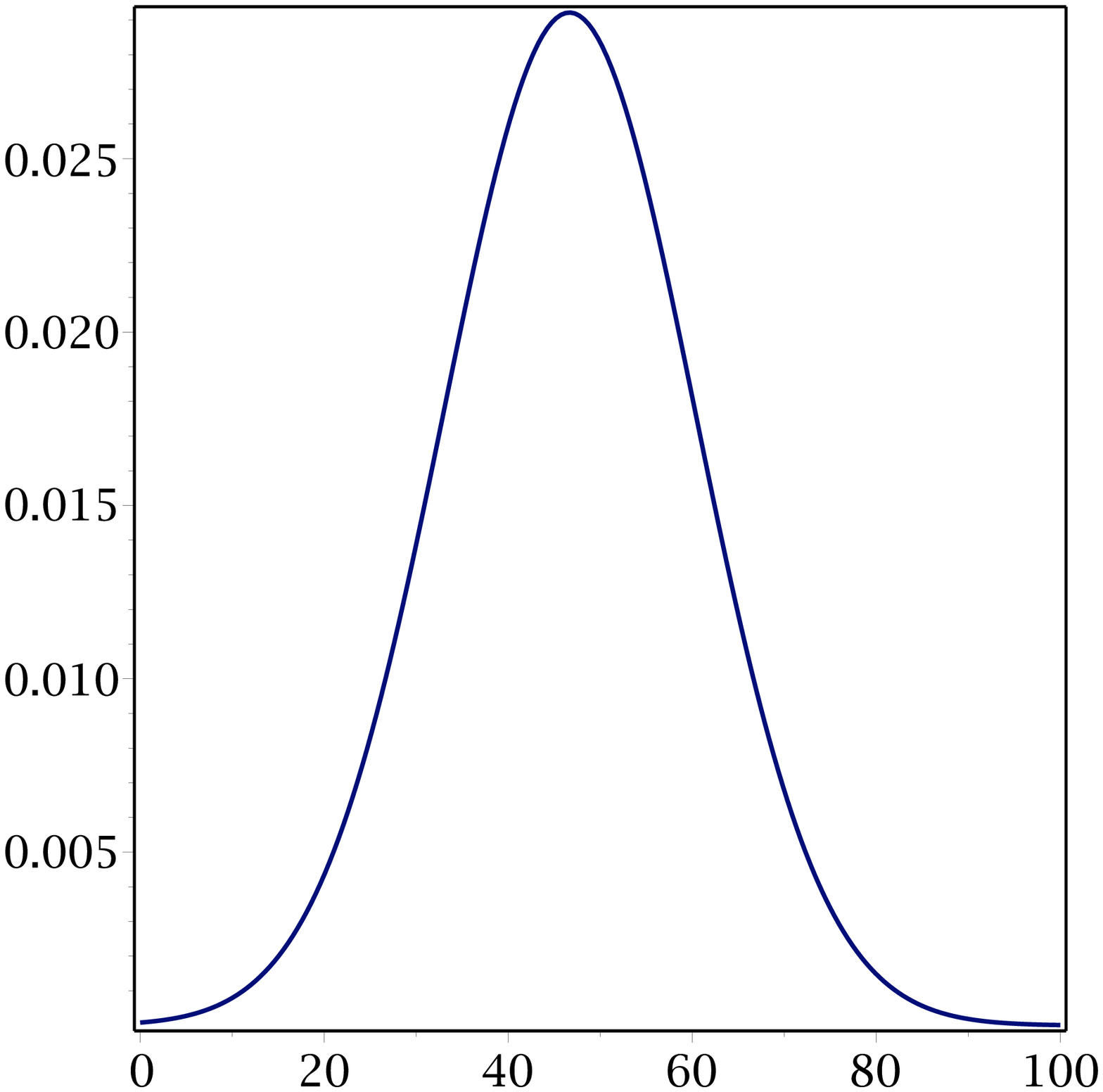}
\caption{On the left: the histogram of the distribution of heights of the points 
in the right-hand picture of Figure \ref{fig:est-by-segm}. 
On the right: the density of the normal distribution with the same mean $46.66$
and standard deviation $13.66$.}
\label{fig:density}
\end{center}
\end{figure}


\subsection{Conclusion}

Our main aim in this note has been to give heuristic and computational evidence that 
there are infinitely many projective primes, especially in the simplest and apparently 
most abundant case, where $n=3$ and $q$ is prime.
We will not pursue these speculations further and will leave the question of more exact 
estimates of the number and distribution of projective primes to the community of experts 
in probabilistic number theory.
(Our own backgrounds and motivation for this investigation lie in the areas of dessins 
d'enfants and permutation groups.)

\bigskip

\paragraph{\bf Acknowledgements} We are greatly indebted to Yuri Bilu who acquainted us 
with the Bunyakov\-sky conjecture, which became a crucial point of our study, and to 
Peter Cameron, Robert Guralnick and Cheryl Praeger for some very useful comments.
Jean B\'etr\'ema helped us with some computations which were too heavy for our Maple
package on a laptop computer. 
Alexander Zvonkin was partially supported by the ANR project {\sc Combin\'e}
(ANR-19-CE48-0011).


\end{document}